\newtheorem{theorem}{Theorem}[section]
\newtheorem{lemma}[theorem]{Lemma}
\newtheorem{proposition}[theorem]{Proposition}
\newtheorem{corollary}[theorem]{Corollary}
\newtheorem{definition}[theorem]{Definition}
\newtheorem{remark}[theorem]{Remark}
\author{Maciej Wi\'sniewolski }
\address{Institute of Mathematics, University of Warsaw,  Banacha 2 \\
 02-097 Warszawa, Poland;\\ e-mail: {\tt  wisniewolski@mimuw.edu.pl }  }
\title[Inside the nature of squared Bessel process]
 {Inside the nature of squared Bessel process}
\begin{document}

\maketitle
\begin{center}
{\small
 Institute of Mathematics, University
of Warsaw \\
  Banacha 2, 02-097 Warszawa, Poland \\
 e-mail:  {\tt M.Wisniewolski@mimuw.edu.pl} }

\end{center}
\begin{abstract} A new stochastic process is introduced and considered - squared Bessel process with special stochastic time. The analogues of fundamental properties for Brownian motion are deduced for squared Bessel process. In particular an analogue of the celebrated strong Markov construction of Brownian motion independent of a given sigma field is presented and proved. This result has strong consequences. It allows for deeper understanding the nature of first hitting time of squared Bessel process. The joint distribution of two correlated squared Bessel processes is presented. For squared Bessel process it is also established a new interesting time inversion result. It is presented a general formula that ties squared Bessel process, geometric Brownian motion and its additive functional and that is generalization of the known Lamperti's relation. The new introduced process enables to find the conditional distribution of the first hitting time of a squared Bessel process with nonnegative index. Finally a completely new method of finding the density of first hitting time of squared Bessel process with nonnegative index is presented.
\end{abstract}

\noindent
\begin{quote}
 \noindent  \textbf{Key words}: squared Bessel process, Markov property, first hitting time of squared Bessel process, time reversal, Lamperti's relation

\ \\

\textbf{AMS Subject Classification}: 60H30, 60H35, 60J60,60J70

\end{quote}

\section{Introduction}
Time changing and use of so called Bessel clock were among plenty of techniques used to explain the distribution of Brownian motion functionals. In series of works
Yor, Matsumoto and co-authors delivered results for distributional dependencies betwen $A_t = \int_0^te^{2B_u +2\mu u }du$ and $e^{B_t + \mu t}$ using the unique properties of squared Bessel processes (BESQ) (see for instance \cite{DMY}, \cite{MYI}, \cite{MYII}, \cite{MYIV}, \cite{ADY}). These studies have given a reflection of the great and still growing importance BESQ processes play in distributional explanation of Brownian motion functionals. In this work we follow these studies. We introduce the new process with special stochastic time that turns out to play some surprising role in the understanding the distribution and path behaviour of squared Bessel process. To give an example - the very fundamental result coming from strong Markov property for Brownian motion is that for a stopping time $\tau$ the process $(B_{\tau + t} - B_{\tau})$ is another Brownian motion independent of $\mathcal{F}_{\tau}$ on some filtered probability space with Brownian motion $B$. The natural question one may ask is if the similar construction can be deduced for another fundamental processes for instnace for squared Bessel process. In this work we answer this question positively for a squared Bessel process with nonnegative index. The answer comes along with the mentioned new introduced stochastic process which we call squared Bessel time process. If $R$ is a squared Bessel process with nonnegative index and a $(\mathcal{F}_t)$- stopping time $\tau$, then the process
$$
	\check{R}_a = \frac{R(\tau + aR(\tau))}{R(\tau)}, \ a\geq 0
$$
turns out to be a new squared Bessel process independent of $\mathcal{F}_{\tau}$.
We present the natural example where squared Bessel time process occurs - geometric Brownian motion. It is known that the triple of geometric Brownian motion, its additive functional and squared Bessel process constitute some closed system known as Lamperti's relation (see for instance \cite{LI}, \cite{LII}, \cite{MYIV}). We discover that the new introduced process is strictly connected with this system. We present that Lamperti's relation is a particular case of even more general identity that ties the triple. This observation definitely extends the understanding of the relation between geometric Brownian motion, it's additive functional and related squared Bessel process.

\ \\
We present some interesting application of the squared Bessel time process: we establish the joint law of two correlated squared Bessel processes. This is important result as the popular in appliactions CIR process is nothing but squared Bessel process with deterministic changed time (see for instance Section 6.3 in \cite{JYC}). Another application presented is the new time inversion result for squared Bessel process. The review and examples of time inversion results for Markov processes can be found in \cite{MYII}. The new result we present in this paper is that the law of inversed squared Bessel process is the same as the law of (scaled) squared Bessel process starting from the random point. 

\ \\
Finally, an effective application of the introduced process is the description of distribution and conditional (and hence joint) distribution of first hitting times for squared Bessel process. The first hitting time is an important object in theory of stochastic processes. In particular the distribution of first hitting time is crucial in financial mathematics for problem of barrier options valution. The study of the first hitting time for a Bessel process has been undertaken recently by Hamana and Matsumoto in \cite{HM1} and \cite{HM2}. They found the solution for an appropriate differential equation tied to the generator of the Bessel process and used the fundamental fact that the Laplace transform of hitting time is such a solution (see also Kent \cite{K}). After inverting the Laplace transform, which is very hard computational task using among others theory of meromorphic functions and other complex analysis techniques, they obtained explicit expressions for the densities by means of the zeros of the Bessel functions. Some sharp estimates for the density of first hitting time of Bessel process were presented also in \cite{BMR}. Instead of considering the first hitting time for Bessel process we consider the first hitting time of its square - namely squared Bessel process. We use the squared Bessel time process to establish the conditional distribution $\mathbb{P}(\tau_y \leq t|R(T) = x)$, where $\tau_y$ is the first hitting time of $y$ by a squared Bessel process $R$ and $t\leq T$. We present completely new method of obtaining the density of $\tau_y$ by a solution of special inhomogenuous Volterra equation of the first kind. The interesting property of trajectory of squared Bessel process is the deduced:  if $t>0$ and a starting from $1$ BESQ at time $t$ satisfies $R(t) > 1$, there is almost sure a point on interval $(0,t]$ such that $R$ reaches $1$ at this point. 

\section{Bessel time process}
In the sequel $R$ denotes a squared Bessel process (BESQ), considered when needed on the canonical space of all continuous functions on $[0,\infty)$ along with its Borel sets $(\mathcal{C}([0,\infty),\mathcal{B})$. By $\mathbb{P}_{x}$ we denote the measure for coordinate process such that $R(0) = x\geq 0$ - we are using the standard notation for a Markov family indexed by a starting point $x$ (see for instance Chapter III in \cite{RY}). For simplicity we will use $\mathbb{P}$ instead of $\mathbb{P}_{1}$ (for the special case $x=1$).

\ \\
When canonical space is considered - $(\mathcal{F}_t)_{t\geq 0}$ denotes canonical filtration generated by coordinate process, augmented to satisfy usual conditions. On the space with a squared Bessel process $R$, $(\mathcal{F}^R_t)_{t\geq 0}$ denotes filtration generated by $R$ and augmented to satisfy usual conditions. On the canonical space $\mathcal{C}([0,\infty)\rightarrow \mathbb{R})$ we follow the standard notation where $\theta$ denotes the shift operator. For a squared Bessel process $R$ we assume that index $\mu$ is nonnegative, which assures that the set $\{0\}$ is polar. If $\mu > 0$ the process is transient and never reaches $0$ (for a detailed disscusion see Section 1 Chapter XI in \cite{RY}).

\begin{definition} Let $R$ be a squared Bessel process of index $\mu\geq 0$ starting from $1$. Let $a\geq 0$. We define a Bessel time process as
\begin{equation}\label{BTP}
	X(t,a) = \frac{R(t+aR(t))}{R(t)}, \ t\geq 0.
\end{equation}
\end{definition}
It is clear from the definition that $(X(t,a))_t$ is not an $(\mathcal{F}^R_t)_{t\geq 0}$-adapted process. However, it is a functional of $R$ and it turns out that it inherits some markovian features. Let us start from the following elementary lemma.
\begin{lemma} \label{shift} Fix $a\geq 0$. We have on the canonical space 
\begin{equation} \label{XR}
	X(t+s,a) = X(t,a)\circ\theta_s,
\end{equation}
for $s,t \geq 0$. Moreover for any positive Borel-measureable function $f$ on $[0,\infty)$
\begin{equation}\label{fX}
	\mathbb{E}(f(X(t+s,a))|\mathcal{F}_s) = \mathbb{E}_{R(s)}f(X(t,a)).
\end{equation}
\end{lemma}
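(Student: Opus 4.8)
The plan is to prove the shift identity \eqref{XR} by a direct computation with the coordinate process, and then to derive the conditional expectation formula \eqref{fX} as an immediate consequence of the simple Markov property of the squared Bessel family $(\mathbb{P}_x)_{x\geq 0}$.

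First I would record that $X(t,a)$ is a genuine measurable functional on the canonical space, so that all the expressions below make sense. Since $\omega\mapsto\omega(u)$ is continuous in $u$ and measurable in $\omega$, the evaluation map is jointly measurable, and hence $\omega\mapsto\omega\big(t+a\,\omega(t)\big)=R(t+aR(t))$ is $\mathcal{F}_\infty$-measurable; because $\mu\geq 0$ makes $\{0\}$ polar we have $R(t)>0$ almost surely, so the quotient $X(t,a)=R(t+aR(t))/R(t)$ is well defined a.s. Thus $f(X(t,a))$ is an $\mathcal{F}_\infty$-measurable functional for every positive Borel function $f$, and it is the \emph{same} functional of the path under every $\mathbb{P}_x$, so that $\mathbb{E}_{R(s)}f(X(t,a))$ is understood as $g(x)=\mathbb{E}_x f(X(t,a))$ evaluated at the random point $x=R(s)$.

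Next I would establish \eqref{XR}. Recalling that the shift acts by $(\theta_s\omega)(u)=\omega(s+u)$, I compute numerator and denominator separately: on the one hand $R(t)\circ\theta_s=R(s+t)$, while on the other hand the composite time argument transforms as $\big(R(t+aR(t))\big)\circ\theta_s=(\theta_s\omega)\big(t+a(\theta_s\omega)(t)\big)=\omega\big(s+t+a\,\omega(s+t)\big)=R\big(s+t+aR(s+t)\big)$. Dividing gives $X(t,a)\circ\theta_s=R\big(s+t+aR(s+t)\big)/R(s+t)=X(s+t,a)$, which is \eqref{XR}. The one point requiring care here is that the random clock $t+aR(t)$ must itself be shifted, so that $t+a(\theta_s\omega)(t)=t+aR(s+t)$; this is the crux of the identity and the only place where the particular time-change structure enters, and it is where I expect the bookkeeping to be most delicate.

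Finally, for \eqref{fX} I would apply \eqref{XR} to write $f(X(t+s,a))=f\big(X(t,a)\circ\theta_s\big)=\big(f(X(t,a))\big)\circ\theta_s$, so that with $F:=f(X(t,a))$ the left-hand side is exactly $\mathbb{E}(F\circ\theta_s\mid\mathcal{F}_s)$. By the simple Markov property of the squared Bessel family, $\mathbb{E}(F\circ\theta_s\mid\mathcal{F}_s)=\mathbb{E}_{R(s)}(F)=\mathbb{E}_{R(s)}f(X(t,a))$, which is \eqref{fX}. For bounded $f$ this is immediate; for a general positive Borel $f$ I would pass to the limit through the truncations $f\wedge n$ using monotone convergence on both sides. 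The main obstacle is therefore not analytical but purely a matter of correctly shifting the random time argument in the shift step, after which the Markov property does all of the work.
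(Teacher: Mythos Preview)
Your proof is correct and follows essentially the same route as the paper: a direct computation with the shift operator for \eqref{XR}, followed by an application of the simple Markov property of the BESQ family for \eqref{fX}. Your added remarks on measurability of the functional and on passing from bounded to general positive Borel $f$ via monotone convergence are not in the paper's proof but are appropriate points of rigor; the core argument is identical.
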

\begin{proof} Let $R$ be the coordinate process. In our notation $R$ is under $\mathbb{P}$ the squared Bessel process such that $R(0) = 1$ and $\mathcal{F}_t$ takes the role of $\mathcal{F}^R_t$.
We have
\begin{align*}
	X(t+s,a)(\omega) &= \frac{\omega(t+s+a\omega(t+s))}{\omega(t+s)}
	= \frac{R(t+aR(t))}{R(t)}(\theta_s (\omega)) = X(t,a)\circ\theta_s(\omega).
\end{align*} 
Using Markov property of $R$ for a positive Borel-measureable function $f$ we can write
\begin{align*}
	\mathbb{E}(f(X(t+s,a)|\mathcal{F}_s) &= \mathbb{E}(f(X(t,a)\circ\theta_s)|\mathcal{F}_s)\\
	 &= \mathbb{E}\Big(f\Big(\frac{R(t+aR(t))}{R(t)}\circ\theta_s\Big)\Big|\mathcal{F}_s\Big)\\
	 &= \mathbb{E}_{R(s)}f(X(t,a)),
\end{align*}
which gives equality (\ref{fX}).
\end{proof}

\ \\
For next proposition let us introduce some auxiliary notation. Let $R^{v^2}$ be a BESQ($\mu$) starting from $v^2 (v \geq 0)$. Let us define $$X^{v^2}(t,a) = \frac{R^{v^2}(t+aR^{v^2}(t))}{R^{v^2}(t)},$$
where $a\geq 0$ is fixed. In case $v =1$ we use $R$ instead of $R^1$ and $X^1(t,a) = X(t,a)$ defined by (\ref{BTP}).
\begin{proposition} \label{PMS} For a fixed $a\geq 0, t \geq 0$, a positive Borel-measureable function $f$ on $[0,\infty)$ and any $v\geq 0$
\begin{equation}
	\mathbb{E}(f(X^{v^2}(t,a))) = \mathbb{E}f\Big(R(a)\Big).
\end{equation}
\end{proposition}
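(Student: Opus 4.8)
The plan is to condition on $\mathcal{F}_t$ and exploit two structural features of the family: the Markov property and the Brownian-type scaling invariance of squared Bessel processes. First I would observe that $R^{v^2}(t)$ is $\mathcal{F}_t$-measurable and that, since $\mu\geq 0$ makes $\{0\}$ polar, $R^{v^2}(t)>0$ almost surely, so that $X^{v^2}(t,a)$ is well defined and the division below is legitimate. Writing $y=R^{v^2}(t)$, the random evaluation time $t+aR^{v^2}(t)$ becomes, conditionally on $\mathcal{F}_t$, the deterministic time $t+ay$ applied to the post-$t$ trajectory. By the Markov property the process $(R^{v^2}(t+s))_{s\geq 0}$ is, conditionally on $\{R^{v^2}(t)=y\}$, a BESQ($\mu$) started from $y$ and independent of $\mathcal{F}_t$, exactly as in the reasoning behind (\ref{fX}). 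Hence
$$
\mathbb{E}\big(f(X^{v^2}(t,a))\,\big|\,\mathcal{F}_t\big)=g\big(R^{v^2}(t)\big),\qquad g(y):=\mathbb{E}_y f\Big(\frac{R(ay)}{y}\Big),
$$
where $\mathbb{E}_y$ denotes expectation for the coordinate process started from $y$.

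The decisive step is to show that $g$ is constant and equal to $\mathbb{E}f(R(a))$. For this I would invoke the scaling property of squared Bessel processes (see Chapter XI in \cite{RY}): for $c>0$ the process $(c^{-1}R(ct))_{t\geq 0}$ under $\mathbb{P}_y$ has the law of a BESQ($\mu$) started from $y/c$. Taking $c=y$ shows that $(y^{-1}R(yu))_{u\geq 0}$ under $\mathbb{P}_y$ has the law of $R$ under $\mathbb{P}_1=\mathbb{P}$. Evaluating at $u=a$ gives $y^{-1}R(ay)\stackrel{d}{=}R(a)$ under $\mathbb{P}$, so that $g(y)=\mathbb{E}f(R(a))$ for every $y>0$, independently of $y$.

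Finally, since $g$ is constant, taking total expectations yields
$$
\mathbb{E}\big(f(X^{v^2}(t,a))\big)=\mathbb{E}\big(g(R^{v^2}(t))\big)=\mathbb{E}f(R(a)),
$$
which is the claim; note that neither $t$ nor $v$ survives, precisely because the time $ay$ entering the numerator matches the normalizing factor $y$ in the denominator, so the scaling exponents cancel. The one point that needs genuine care is this interaction between the anticipating time $aR^{v^2}(t)$ and the normalization by $R^{v^2}(t)$: the argument works because conditioning on $\mathcal{F}_t$ freezes the value $y=R^{v^2}(t)$, converting a path-dependent evaluation time into a deterministic time for a freshly restarted, then rescaled, BESQ. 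I expect this coupling of the evaluation time to the denominator to be the main conceptual obstacle, while the Markov and scaling inputs are entirely standard.
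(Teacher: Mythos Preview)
Your argument is correct. It differs from the paper's proof in method: the paper works directly with the explicit Bessel transition density $p_t(v,x)$, expresses $\mathbb{E}f(X^{v^2}(t,a))$ as a double integral in $(x,y)$, substitutes $\tilde y=y/x$, and observes that the integrand factors, so the $x$-integral integrates to $1$ and the $\tilde y$-integral is exactly $\mathbb{E}f(R(a))$. You instead condition on $\mathcal{F}_t$, use the Markov property to restart at $y=R^{v^2}(t)$, and then apply the abstract scaling property of BESQ to see that $g(y)=\mathbb{E}_y f(R(ay)/y)$ is constant in $y$. The two proofs rest on the same structural fact---the density identity $p_{ax^2}(x,y)\,dy=p_a(1,y/x)\,d(y/x)$ used in the paper is precisely the scaling property written at the level of transition densities---but your route is more conceptual and avoids any explicit computation, while the paper's computation is self-contained once the density formula is recalled. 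One small remark: your polarity justification for $R^{v^2}(t)>0$ covers $t>0$; for $t=0$ you need $v>0$, an edge case neither proof addresses explicitly.
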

\begin{proof}
Let $t> 0$ and $r^v_t = \sqrt{R^{v^2}_t}$ be a Bessel process. The density $p_t$ of a Bessel process $r^v$ is obtained from that of BESQ by a straightforward change of variables (see Section 1 in Chapter XI \cite{RY}).
\begin{equation} \label{transdens}
\mathbb{P}(r^v_t\in dx) = p_t(v,x)dx =  \frac{1}{t}\Big(\frac{x}{v}\Big)^{\mu}xe^{-\frac{x^2+v^2}{2t}}I_{\mu}\Big(\frac{xv}{t}\Big)dx,
\end{equation}
where $I_{\mu}$ is a modified Bessel function. 
Let us look at $X^{v^2}(t,a)$ as a functional of the Bessel process $r^v_t$. We have
\begin{align*}
 &\mathbb{E}f\Big(X^{v^2}(t,a)\Big) = \int_0^{\infty}\int_0^{\infty}f\Big(\frac{y^2}{x^2}\Big)p_t(v,x)p_{ax^2}(x,y)dxdy\\
 &= \int_0^{\infty}\int_0^{\infty}f\Big(\frac{y^2}{x^2}\Big)\frac{1}{t}\Big(\frac{x}{v}\Big)^{\mu}xe^{-\frac{x^2+v^2}{2t}}I_{\mu}\Big(\frac{xv}{t}\Big) \frac{1}{ax^2}\Big(\frac{y}{x}\Big)^{\mu}ye^{-\frac{y^2+x^2}{2ax^2}}I_{\mu}\Big(\frac{y}{ax}\Big)dxdy\\
 &= \int_0^{\infty}\int_0^{\infty}f(\tilde{y}^2)\frac{1}{t}\Big(\frac{1}{v}\Big)^{\mu}x^{\mu+1}e^{-\frac{x^2+v^2}{2t}}I_{\mu}\Big(\frac{xv}{t}\Big) \frac{1}{a}(\tilde{y})^{\mu+1}e^{-\frac{\tilde{y}^2+1}{2a}}I_{\mu}\Big(\frac{\tilde{y}}{a}\Big)d\tilde{y}dx,
\end{align*}
where in the last equality we use Fubbini's theorem and substitution $\tilde{y} = y/x$. From the last expression we obtain
\begin{align*}
\mathbb{E}f(X^{v^2}(t,a)) &= \int_0^{\infty}\frac{1}{t}\Big(\frac{1}{v}\Big)^{\mu}x^{\mu+1}e^{-\frac{x^2+v^2}{2t}}I_{\mu}\Big(\frac{xv}{t}\Big)dx\int_0^{\infty}\frac{f(\tilde{y}^2)}{a}(\tilde{y})^{\mu+1}e^{-\frac{\tilde{y}^2+1}{2a}}I_{\mu}\Big(\frac{\tilde{y}}{a}\Big)d\tilde{y}\\
	&= \mathbb{E}f\Big(R(a)\Big).
\end{align*}
\end{proof}

\ \\
For next considerations let us
denote $$H^f(v^2,t,a) = \mathbb{E}(f(X^{v^2}(t,a)))$$
and observe that from Proposition \ref{PMS} $H^f(v^2,t,a)$ does not depend on $v^2$.
\begin{corollary}\label{cor1} Let $a\geq 0$. For fixed $t\geq 0$, on the canonical space
\begin{equation}
	X(t,a)  \stackrel{(law)}{=} R(a).
\end{equation} 
Moreover $X(t,a)$ is independent of $\mathcal{F}_t$ and the
two above facts hold for any $(\mathcal{F}_t)_{t\geq 0}$- stopping time $\tau$.
\end{corollary}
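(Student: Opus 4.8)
The plan is to obtain all three assertions from Proposition \ref{PMS} combined with the Markov property, the decisive input being that $H^f(v^2,t,a)=\mathbb{E}f(X^{v^2}(t,a))$ does not depend on $v^2$. The equality in law is then immediate: taking $v=1$ in Proposition \ref{PMS} gives $\mathbb{E}f(X(t,a))=\mathbb{E}f(R(a))$ for every positive Borel function $f$, which is exactly $X(t,a)\stackrel{(law)}{=}R(a)$.

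For the independence I would first record the pathwise identity $X(t,a)=X(0,a)\circ\theta_t$, which is the instance $t\mapsto 0$, $s\mapsto t$ of (\ref{XR}) in Lemma \ref{shift}. Applying the simple Markov property at the fixed time $t$ to the post-$t$ functional $X(0,a)\circ\theta_t$ then yields, for bounded Borel $f$,
\[
\mathbb{E}\big(f(X(t,a))\,\big|\,\mathcal{F}_t\big)=\mathbb{E}\big(f(X(0,a))\circ\theta_t\,\big|\,\mathcal{F}_t\big)=\mathbb{E}_{R(t)}\,f(X(0,a)).
\]
It remains to identify the right-hand side. Under $\mathbb{P}_w$ one has $X(0,a)=R^{w}(aw)/w=X^{w}(0,a)$, so $\mathbb{E}_{w}f(X(0,a))=H^f(w,0,a)$, and by Proposition \ref{PMS} this equals $\mathbb{E}f(R(a))$ for every $w\ge 0$. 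Hence $\mathbb{E}(f(X(t,a))\mid\mathcal{F}_t)=\mathbb{E}f(R(a))$ is a deterministic constant; since this holds for all bounded Borel $f$, the variable $X(t,a)$ is independent of $\mathcal{F}_t$.

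For an $(\mathcal{F}_t)$-stopping time $\tau$ I would run the same argument with the strong Markov property in place of the simple one. On $\{\tau<\infty\}$ the relation $X(\tau,a)=X(0,a)\circ\theta_\tau$ holds directly from the definition (\ref{BTP}), and the strong Markov property gives
\[
\mathbb{E}\big(f(X(\tau,a))\,\big|\,\mathcal{F}_\tau\big)=\mathbb{E}_{R(\tau)}\,f(X(0,a))=H^f(R(\tau),0,a)=\mathbb{E}f(R(a)),
\]
again a constant, which delivers both the equality in law and the independence from $\mathcal{F}_\tau$.

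I expect no genuine obstacle here, only two points requiring care. First, the Markov property must be applied to the honest shifted functional $X(0,a)\circ\theta_t$ rather than to $X(t,a)$ directly; the shift identity of Lemma \ref{shift} is precisely what licenses this rewriting. Second, in the stopping-time case one should restrict to $\{\tau<\infty\}$ before invoking the strong Markov property (under the standing assumption $\mu\ge 0$ the levels $R(\tau)$ stay positive, so $X(\cdot,a)$ is well defined). All the substantive content---namely that the conditional law of $X(t,a)$ fails to see the current level $R(t)$---is already packaged in Proposition \ref{PMS}, so the corollary needs no further computation with Bessel densities.
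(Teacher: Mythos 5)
Your proposal is correct and follows essentially the same route as the paper: both reduce everything to the fact that $H^f(v^2,t,a)$ from Proposition \ref{PMS} is constant in $v^2$ (and $t$), obtain independence at a fixed time via the shift identity of Lemma \ref{shift} plus the simple Markov property with the substitution $t\mapsto 0$, $s\mapsto t$, and handle stopping times by the strong Markov property together with the scaling identification $X(0,a)=R^{w}(aw)/w$ under $\mathbb{P}_w$. No substantive difference.
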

\begin{proof} By previous proposition distribution of $X(t,a)$, for fixed $t\geq 0$ and $a\geq 0$
does not depend on $t$ nor on the starting point of $R^{v^2}$.
As a consequence for every $s,t\geq 0$ random variable $X(t+s,a)$ does not depend on $\mathcal{F}_s$. Indeed, for any positive Borel-measureable function $f$ on $[0,\infty)$ from
(\ref{fX}) and observation about $H$ made before corollary we obtain
\begin{align*}
	\mathbb{E}(f(X(t+s,a))|\mathcal{F}_s) &= \mathbb{E}_{R(s)}f(X(t,a)) = H^f(R(s),t,a)\\
	 &= \mathbb{E}f\Big(R(a)\Big).
\end{align*}
To conclude that $X(t,a)$ is independent of $\mathcal{F}_t$ it is enough in last equality to put $0$ instead of $t$ and $t$ instead of $s$.

Let us now observe that due to strong markovianity of $R$,
the above facts hold for any $(\mathcal{F}_t)_{t\geq 0}$- stopping time $\tau$. 
Indeed
\begin{align*}	
\mathbb{E}\Big(f(X(\tau,a))\Big|\mathcal{F}_{\tau}\Big)
	&= \mathbb{E}\Big(f((R(aR(0))/R(0))\circ\theta_{\tau})\Big|\mathcal{F}_{\tau}\Big)\\
	&= H^f(R(\tau),0,a) = \mathbb{E}\Big(f(R(a))\Big),
\end{align*}
where in the second equality we used the scaling property of BESQ, namely: for any $x>0$ the process $(\frac{1}{x}R^x(xt), t\geq0)$ has the same distribution as $(R^1(t), t\geq 0)$ (see Chapter XI Prop. 1.6 in \cite{RY}).
In result $X_{\tau}$ is independent of $\mathcal{F}_{\tau}$.
\end{proof}

\ \\
Let us define by $\tau_y$ the first moment of reaching $y>0$ by $R$, i.e. 
\begin{align*}
	\tau_y = \inf\{u>0: R(u) = y\}.
\end{align*}
\begin{proposition} Let $y>0$. For every $\epsilon > 0$
\begin{equation}
	R(\tau_y + \epsilon y) \stackrel{(law)}{=} y R(\epsilon).
\end{equation} 
Moreover for $t>0$ and any positive Borel-measureable $f$ we have 
\begin{equation}
	\mathbb{E}\Big(f(R(\tau_y + \epsilon y))\Big|\tau_y > t\Big) = \mathbb{E}f(yR(\epsilon)).
\end{equation}
\end{proposition}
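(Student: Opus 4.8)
The plan is to derive both assertions directly from Corollary \ref{cor1} applied to the stopping time $\tau=\tau_y$, once $R(\tau_y+\epsilon y)$ has been rewritten as a value of the Bessel time process. First I would use path-continuity of $R$: since $\tau_y$ is the first hitting time of the level $y$, on the event $\{\tau_y<\infty\}$ we have $R(\tau_y)=y$. Taking the fixed parameter $a=\epsilon$ in the definition (\ref{BTP}) and evaluating the Bessel time process at the stopping time $\tau_y$ then gives
$$X(\tau_y,\epsilon)=\frac{R(\tau_y+\epsilon R(\tau_y))}{R(\tau_y)}=\frac{R(\tau_y+\epsilon y)}{y},$$
so that $R(\tau_y+\epsilon y)=y\,X(\tau_y,\epsilon)$. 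The first claim is now immediate: the stopping-time version of Corollary \ref{cor1} asserts $X(\tau_y,\epsilon)\stackrel{(law)}{=}R(\epsilon)$, whence $R(\tau_y+\epsilon y)=y\,X(\tau_y,\epsilon)\stackrel{(law)}{=}yR(\epsilon)$.

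For the conditional statement I would invoke the independence half of Corollary \ref{cor1}, namely that $X(\tau_y,\epsilon)$ is independent of $\mathcal{F}_{\tau_y}$. The point to check is that the conditioning event lies in this $\sigma$-field, $\{\tau_y>t\}\in\mathcal{F}_{\tau_y}$; I would verify this straight from the definition of $\mathcal{F}_{\tau_y}$, observing that $\{\tau_y>t\}\cap\{\tau_y\leq s\}$ is empty for $s\leq t$ and equals $\{\tau_y\leq s\}\setminus\{\tau_y\leq t\}\in\mathcal{F}_s$ for $s>t$. Writing $g(u)=f(yu)$, so that $f(R(\tau_y+\epsilon y))=g(X(\tau_y,\epsilon))$, independence of $X(\tau_y,\epsilon)$ from the event $\{\tau_y>t\}\in\mathcal{F}_{\tau_y}$ turns the conditional expectation into the unconditional one, giving
$$\mathbb{E}\big(f(R(\tau_y+\epsilon y))\,\big|\,\tau_y>t\big)=\mathbb{E}\,g(X(\tau_y,\epsilon))=\mathbb{E}\,g(R(\epsilon))=\mathbb{E}f(yR(\epsilon)),$$
as asserted.

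The argument is short because Corollary \ref{cor1} carries the whole probabilistic load; the only places needing genuine care are bookkeeping ones. The main obstacle I anticipate is honesty about finiteness of the hitting time: the identity $R(\tau_y)=y$, and indeed the very meaning of $R(\tau_y+\epsilon y)$, requires $\tau_y<\infty$, which for index $\mu>0$ and $y<1$ holds only on a set of positive measure; I would therefore state and prove the result on $\{\tau_y<\infty\}$ (it has full measure when $\mu=0$, or when $y\ge 1$). The secondary point is the membership $\{\tau_y>t\}\in\mathcal{F}_{\tau_y}$, which is precisely what licenses dropping the conditioning, and which the short computation above settles.
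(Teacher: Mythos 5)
Your proof is correct and follows essentially the same route as the paper's: both rewrite $R(\tau_y+\epsilon y)/y$ as $X(\tau_y,\epsilon)$ using $R(\tau_y)=y$ and then invoke the stopping-time version of Corollary \ref{cor1} for the law and for independence from $\mathcal{F}_{\tau_y}$, with $\{\tau_y>t\}\in\mathcal{F}_{\tau_y}$ doing the work in the conditional statement. Your added remarks on the measurability of $\{\tau_y>t\}$ and on the possible non-finiteness of $\tau_y$ when $\mu>0$ and $y<1$ are legitimate points of care that the paper glosses over, but they do not change the argument.
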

\begin{proof} For $\epsilon>0$ consider $X(t,\epsilon)$. Let $f$ be Borel and positive. From the strong Markov property of $R$ we have
\begin{align*}
	\mathbb{E}f\Big(\frac{R(\tau_y + \epsilon y)}{y}\Big) &= \mathbb{E}f(X(\tau_y,\epsilon))\\
	 &= \mathbb{E}f(X(0,\epsilon)\circ\theta_{\tau_y})\\
	&= H^f(y,0,\epsilon)\\
	 &= \mathbb{E}f(R(\epsilon)).
\end{align*}
For the second part of the proposition observe that from Corollary \ref{cor1} $\frac{R(\tau_y + \epsilon y)}{y}$ has a BESQ distribution at time $\epsilon$ independent of $\tau_y$.
In result 
\begin{align*}
	\mathbb{E}1_{\{\tau_y > t\}}f\Big(R(\tau_y + \epsilon y)\Big) &= 
	 \mathbb{E}\Big(1_{\{\tau_y > t\}}f\Big(\frac{R(\tau_y + \epsilon y)}{y}y\Big)\Big)\\
	&= \mathbb{P}(\tau_y>t)\mathbb{E}f(yR(\epsilon)),
\end{align*}
where in the second equality we used again Corollary \ref{cor1}.
\end{proof}

The next theorem tells that having obtained $y$, in a short time, process $R$ escapes in some sense arbitrarily far away from the point $y$.
\begin{theorem} Let $\delta > 0$. Then
\begin{equation}
	\lim_{\epsilon\rightarrow 0}\mathbb{P}\Big(|R(\tau_y + \epsilon y)- y|\leq \delta\Big) = 0.
\end{equation}
\end{theorem}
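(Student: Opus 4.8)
The plan is to eliminate the hitting time $\tau_y$ at once by invoking the preceding proposition, which supplies the distributional identity $R(\tau_y + \epsilon y) \stackrel{(law)}{=} y\,R(\epsilon)$, where on the right $R$ is the BESQ$(\mu)$ started at $1$. Pulling the factor $y$ out of the event converts the quantity to be controlled into a statement about a single one-dimensional marginal,
\[
	\mathbb{P}\Big(|R(\tau_y + \epsilon y) - y| \le \delta\Big) = \mathbb{P}\Big(|R(\epsilon) - 1| \le \tfrac{\delta}{y}\Big) = \mathbb{P}\big(R(\epsilon) \in I_\delta\big),
\]
where $I_\delta = \big[\max\{0,\,1 - \delta/y\},\, 1 + \delta/y\big]$ is a fixed compact interval. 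Thus the hitting time disappears and it suffices to track how much mass the law of $R(\epsilon)$ keeps on $I_\delta$.

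I would next make this probability explicit through the transition density (\ref{transdens}). Writing $R(\epsilon) = r(\epsilon)^2$ for the Bessel process $r$ started at $v=1$, the probability becomes a one-dimensional integral,
\[
	\mathbb{P}\big(R(\epsilon) \in I_\delta\big) = \int_{\sqrt{\max\{0,\,1-\delta/y\}}}^{\sqrt{1+\delta/y}} \frac{1}{\epsilon}\, x^{\mu+1}\, e^{-\frac{x^2+1}{2\epsilon}}\, I_\mu\Big(\frac{x}{\epsilon}\Big)\, dx,
\]
and the theorem reduces to showing that this integral tends to $0$. Everything up to here is purely formal, relying only on the preceding proposition and the scaling property of BESQ.

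The heart of the argument, and the step I expect to be the main obstacle, is the asymptotic analysis of this integral. One must substitute the appropriate asymptotic expansion of the modified Bessel function $I_\mu$, combine it with the Gaussian factor uniformly over the fixed compact range of $x$, and justify interchanging the limit with the integral, for instance by dominated convergence against an explicit bound on the density valid on $I_\delta$. The mechanism that should force the integral to $0$ is the spreading of the BESQ marginal, whose mass escapes to infinity so that the portion retained on any fixed bounded set vanishes. The genuinely delicate point is to fix the correct regime: since continuity of the paths of $R$ gives $R(\tau_y + \epsilon y) \to y$, and hence $R(\epsilon) \to 1$, as $\epsilon \downarrow 0$, one must verify with care in which limit the mass on $I_\delta$ actually disappears and select the corresponding small- or large-argument expansion of $I_\mu$ accordingly. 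Once this asymptotic is pinned down the conclusion follows immediately, the preceding reduction being entirely routine.
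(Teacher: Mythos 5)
Your reduction is exactly the paper's: both pass through the identity $R(\tau_y+\epsilon y)\stackrel{(law)}{=}yR(\epsilon)$ from the preceding proposition and arrive at the same one-dimensional integral of the Bessel transition density over the fixed interval $\big[\sqrt{(y-\delta)^+/y},\sqrt{(y+\delta)/y}\big]$. The difference is that you stop there and defer the entire analytic content, flagging the choice of asymptotic regime as the delicate point. That deferral is where your proposal has a genuine gap --- but the concern you raise in flagging it is in fact fatal to the statement as printed. Since $\delta>0$, the interval of integration contains $x=1$ in its interior, and the integral is nothing but $\mathbb{P}\big(r(\epsilon)\in\big[\sqrt{(y-\delta)^+/y},\sqrt{(y+\delta)/y}\big]\big)$ for a Bessel process $r$ started at $1$; by path continuity $r(\epsilon)\to1$ a.s. as $\epsilon\downarrow0$, so this probability tends to $1$, not to $0$ (equivalently, $R(\tau_y+\epsilon y)\to R(\tau_y)=y$ a.s., so the event $\{|R(\tau_y+\epsilon y)-y|\le\delta\}$ becomes certain). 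In integrand terms, writing $f_x(t)=t^{-1}e^{-(x^2+1)/(2t)}I_\mu(x/t)$ as the paper does, one has $f_x(\epsilon)\to0$ for $x\ne1$ but $f_1(\epsilon)\sim(2\pi\epsilon)^{-1/2}\to\infty$, with all the mass concentrating at the interior point $x=1$; there is no integrable dominating function, and the paper's own appeal to ``continuity of $f_x$ and the Lebesgue theorem'' is precisely the invalid step. No completion of your outline can prove the theorem for $\epsilon\to0$, because the theorem is false in that regime.

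The statement that \emph{is} provable by this route is the $\epsilon\to\infty$ limit, which is the regime your ``mass escapes to infinity'' heuristic actually describes: for $t\ge1$ and $x$ ranging over the fixed compact interval one has $f_x(t)\le t^{-1}I_\mu\big(\sqrt{(y+\delta)/y}\big)$, so the integrand tends to $0$ uniformly and the integral vanishes; this matches the second half of the paper's observation that $\lim_{t\to\infty}f_x(t)=0$. So the constructive suggestion is to replace $\lim_{\epsilon\to0}$ by $\lim_{\epsilon\to\infty}$ (and reinterpret the informal gloss about the process ``escaping'' accordingly). As written, both the theorem and the paper's proof are incorrect; your proposal, while incomplete, correctly locates the obstruction at the dominated-convergence step.
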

\begin{proof} From the previous proposition 
\begin{align*}
	\mathbb{P}\Big(|R(\tau_y + \epsilon y)- y|\leq \delta\Big) &= \mathbb{P}\Big( \frac{(y-\delta)^+}{y} \leq R(\epsilon) \leq \frac{y+\delta}{y}\Big)\\
	&=\int_{\sqrt{\frac{(y-\delta)^+}{y}}}^{\sqrt{\frac{y+\delta}{y}}} \frac{1}{\epsilon}x^{\mu+1}e^{-\frac{x^2+1}{2\epsilon}}I_{\mu}\Big(\frac{x}{\epsilon}\Big)dx.
\end{align*}
Let us denote
$$
	f_x(t) = \frac{1}{t}e^{-\frac{x^2+1}{2t}}I_{\mu}\Big(\frac{x}{t}\Big).
$$
Observe that for $x\neq 1$ we have $\lim_{t\rightarrow 0} f_x(t) = \lim_{t\rightarrow \infty} f_x(t) = 0$.
Thus the assertion follows now from continuity of $f_x$ and the Lebesgue theorem.
\end{proof}
Now let us fix $t\geq 0$ and look at the process $(X(t,a))_{a\geq 0}$ with respect to filtration $(\mathcal{F}_a^X)_{a\geq 0}$ where $\mathcal{F}^X_a = \sigma(X(t,u):u\leq a)$. It is clear that $X(t,0) = 1$. The surprising and less obvious is the next:
\begin{theorem} \label{Besa} For any fixed $t\geq 0$ the process $(X(t,a))_{a\geq 0}$ is starting from $1$ and independent of $\mathcal{F}_t$ BESQ with index $\mu \geq 0$.
\end{theorem}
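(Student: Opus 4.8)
The plan is to realize $(X(t,a))_{a\ge 0}$, conditionally on $\mathcal{F}_t$, as a scaled time change of the post-$t$ path of $R$, and then to use the BESQ scaling invariance to erase every trace of $R(t)$. The boundary value comes for free, since $X(t,0)=R(t)/R(t)=1$, and continuity of $a\mapsto X(t,a)$ is inherited from the continuity of the trajectories of $R$; so it suffices to compute the finite dimensional distributions of the process in the variable $a$ and to check that they do not depend on $\mathcal{F}_t$.

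First I would reduce everything to a computation under $\mathbb{P}_{R(t)}$ via the simple Markov property. Fix $0\le a_1<\dots<a_n$ and a bounded Borel function $g$ on $[0,\infty)^n$. Writing $X(t,a)=X(0,a)\circ\theta_t$ as in Lemma \ref{shift} (with $X(0,a)=R(aR(0))/R(0)$), the Markov property gives
$$
\mathbb{E}\big(g(X(t,a_1),\dots,X(t,a_n))\,\big|\,\mathcal{F}_t\big)=\mathbb{E}_{R(t)}\,g\big(X(0,a_1),\dots,X(0,a_n)\big).
$$
Thus the conditional law is governed entirely by the function $w\mapsto \mathbb{E}_{w}\,g((X(0,a_i))_i)$, exactly as in the proof of Corollary \ref{cor1}, but now at the level of joint laws rather than one dimensional marginals.

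The decisive step is to show that this function is constant in $w$. Under $\mathbb{P}_w$ the coordinate process is a BESQ($\mu$) started from $w$, and $X(0,a)=R(aw)/w$; hence $(X(0,a))_{a\ge0}$ is nothing but $\big(\tfrac1w R(wa)\big)_{a\ge0}$. By the BESQ scaling property quoted after Corollary \ref{cor1} (Chapter XI, Prop. 1.6 in \cite{RY}), the process $\big(\tfrac1w R(wa)\big)_{a\ge0}$ under $\mathbb{P}_w$ has the same law as $(R(a))_{a\ge0}$ under $\mathbb{P}_1$, the time dilation by $w$ being precisely compensated by the spatial contraction by $1/w$. Therefore $\mathbb{E}_{w}\,g\big((X(0,a_i))_i\big)=\mathbb{E}_1\,g\big((R(a_i))_i\big)$ for every $w>0$, independently of $w$.

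Combining the two displays, $\mathbb{E}\big(g((X(t,a_i))_i)\mid\mathcal{F}_t\big)=\mathbb{E}_1\,g((R(a_i))_i)$ is deterministic. This simultaneously shows that the finite dimensional distributions of $(X(t,a))_a$ coincide with those of a BESQ($\mu$) issued from $1$ and that, the conditional expectation being a constant, $(X(t,a))_a$ is independent of $\mathcal{F}_t$; together with the path continuity noted above, this identifies $(X(t,a))_{a\ge0}$ as a BESQ($\mu$) from $1$ independent of $\mathcal{F}_t$. I expect the main obstacle to be the rigorous handling of the random, $\mathcal{F}_t$-measurable time change $s=wa$: one must ensure that conditioning on $\mathcal{F}_t$ genuinely freezes $w=R(t)$ so that the scaling relation, an identity in law for a fixed starting point, may legitimately be applied, and that this can be carried out jointly in the finitely many times $a_1,\dots,a_n$ rather than marginal by marginal.
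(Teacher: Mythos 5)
Your proof is correct, but it runs along a different track than the paper's. The paper proves Theorem \ref{Besa} by brute force on finite-dimensional distributions: it writes $\mathbb{E}\prod_i f_i(X(t,a_i))$ as an explicit multiple integral against the Bessel transition densities $p_t(1,x)\,p_{a_1x^2}(x,x_1)\cdots p_{(a_n-a_{n-1})x^2}(x_{n-1},x_n)$, substitutes $\hat{x}_i=x_i/x$, and uses the density-level scaling identity $p_{ax^2}(y,z)=x^{-2}p_a(y/x,z/x)$ to factor out $\int p_t(1,x)\,dx=1$; independence from $\mathcal{F}_t$ is then only asserted by pointing back to Corollary \ref{cor1}, which as stated covers single marginals. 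You instead combine the shift identity $X(t,a)=X(0,a)\circ\theta_t$ of Lemma \ref{shift} with the Markov property to reduce everything to $w\mapsto\mathbb{E}_w\,g\bigl((w^{-1}R(wa_i))_i\bigr)$, and then invoke the process-level scaling property (Prop.\ 1.6, Ch.\ XI of \cite{RY}) once, for the whole path, to see this function is constant. The two arguments rest on the same mechanism — BESQ scaling is exactly what the paper's change of variables encodes — but yours buys two things: it never touches the explicit form of the Bessel density (so it is shorter and less error-prone), and it delivers the independence of the entire process from $\mathcal{F}_t$ in the same stroke, since a constant conditional expectation of arbitrary finite-dimensional functionals is precisely process-level independence; this is cleaner than the paper's appeal to the marginal statement of Corollary \ref{cor1}. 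The paper's computation, on the other hand, exhibits the transition kernel of $(X(t,a))_a$ explicitly, which is what its later arguments reuse. The one point you flag as a worry — freezing $w=R(t)$ under the conditioning — is handled correctly by your reduction to $\mathbb{P}_w$ with deterministic $w$ (and $R(t)>0$ a.s.\ since $\{0\}$ is polar for $\mu\ge 0$), so there is no gap there.
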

\begin{proof} We will show that positive process $r$ given by $(r_a^2)_{a\geq 0} = (X(t,a))_{a\geq 0}$ is a Markov process with transition density given by (\ref{transdens}) and such that $X(t,0) = 1$. Let $p$ be a transition function of $\sqrt{R}$. Let us again look at the $X(t,a)$ as a functional of Bessel process $\sqrt{R}$ (see the proof of Proposition \ref{PMS}). Choose $0\leq a_1 < a_2 < .. < a_n$ and
$f_1, f_2, ..,f_n$, where each $f_i$ is a positive Borel function. Observe that
\begin{align*}
	&\mathbb{E}\Pi_{i=1}^nf_i(r_{a_i}^2) = \mathbb{E}\Pi_{i=1}^nf_i(X(t,a_i))=\\
	&=\int_0^{\infty}..\int_0^{\infty}p_t(1,x)f_1\Big(\frac{x_1^2}{x^2}\Big)p_{a_1x^2}(x,x_1)..f_n\Big(\frac{x_n^2}{x^2}\Big)p_{(a_n-a_{n-1})x^2}(x_{n-1},x_n)dx_n..dx_1dx\\
	&= \int_0^{\infty}..\int_0^{\infty}p_t(1,x)f_1(\hat{x}_1^2)p_{a_1}(1,\hat{x}_1)f_2(\hat{x}_2^2)p_{a_2-a_1}(\hat{x}_1,\hat{x}_2)..\\
	&\times f_n(\hat{x}_n^2)p_{a_n-a_{n-1}}(\hat{x}_{n-1},\hat{x}_n)d\hat{x}_n..d\hat{x}_1dx,
\end{align*}
where we substituted $\hat{x}_i = \frac{x_i}{x}$ and used the fact that for any $a >0$ $$p_{ax^2}(y,z) = \frac{1}{x^2}p_{a}(y/x,z/x).$$
Finally, $X(t,a)$ is independent of $\mathcal{F}_t$ from Corollary \ref{cor1}.
\end{proof}

\begin{remark} \label{stop} The last theorem shows that having a BESQ($\mu$), let's say $R$,  for a fixed $t\geq 0$ we can construct another BESQ $\check{R}_a = \frac{R(t+aR(t))}{R(t)}$ which 
 is again BESQ with index $\mu$, but independent of $\mathcal{F}_t$. This is what we announced in introduction. When one looks closer one can see that this is an analogue construction to $(B_{\tau + t} - B_{\tau})$, $B$ - standard Brownian motion, but for BESQ process. From strong Markov property for BESQ follows that $t$ in the definition of $\check{R}_a$ can be replaced by a stopping time $\tau$. Let us observe that for a process $R$ the usual procedure of adding a cemetary point (to define $R(\tau)$ on the whole space $\Omega$) and puting $R(\tau) = \Delta$ on $\{\tau = \infty\}$ may be forgotten as the death point of $R$ is infinite a.s. ($\zeta = \infty$ - see also remark after Theorem 3.1 in Chapter III and Exercise 2.10 in Chapter IX of \cite{RY}).
\end{remark}

\begin{theorem}\label{BesaM} Let $\tau$ be an $(\mathcal{F}_t)$- stopping time. Then the process $(X(\tau,a))_{a\geq 0}$ is  BESQ with index $\mu \geq 0$ starting from $1$ and independent of $\mathcal{F}_{\tau}$.
\end{theorem}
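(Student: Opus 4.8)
The plan is to identify the full finite-dimensional distributions of $(X(\tau,a))_{a\ge 0}$ conditionally on $\mathcal{F}_\tau$ and to show that they coincide with those of a BESQ$(\mu)$ started from $1$, independently of the information in $\mathcal{F}_\tau$; a single computation then delivers both the distributional claim and the independence. Concretely, I would fix $0\le a_1 < \cdots < a_n$ and positive Borel functions $f_1,\dots,f_n$ and aim to prove
\begin{equation}\label{fdd}
\mathbb{E}\Big(\prod_{i=1}^n f_i(X(\tau,a_i))\Big|\mathcal{F}_\tau\Big) = \mathbb{E}\prod_{i=1}^n f_i(R(a_i)),
\end{equation}
whose right-hand side is deterministic and is precisely the finite-dimensional distribution of a BESQ$(\mu)$ started from $1$ (it is the very quantity already appearing in the fixed-time statement, Theorem \ref{Besa}).

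First I would rewrite the left-hand side as a shifted path functional. Setting $\Phi(\omega) = \prod_{i=1}^n f_i\big(\omega(a_i\omega(0))/\omega(0)\big)$, the same elementary manipulation as in Lemma \ref{shift} (taken at $t=0$) gives $\prod_i f_i(X(\tau,a_i)) = \Phi\circ\theta_\tau$, so that by the strong Markov property of $R$,
\[
\mathbb{E}\big(\Phi\circ\theta_\tau\big|\mathcal{F}_\tau\big) = \mathbb{E}_{R(\tau)}\Phi .
\]
As recalled in Remark \ref{stop}, the lifetime satisfies $\zeta=\infty$ a.s., so no cemetery adjustment is needed here.

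Next I would evaluate $\mathbb{E}_x\Phi$ for an arbitrary starting point $x>0$. Under $\mathbb{P}_x$ the coordinate process is a BESQ$(\mu)$ from $x$, hence $\Phi = \prod_i f_i(R(a_i x)/x)$, and the scaling property of BESQ — that $(\frac{1}{x}R^x(xt), t\ge 0)\stackrel{(law)}{=}(R^1(t), t\ge 0)$, Chapter XI Prop. 1.6 in \cite{RY} — applied jointly at the times $a_1,\dots,a_n$ yields
\[
\mathbb{E}_x\Phi = \mathbb{E}\prod_{i=1}^n f_i(R(a_i)),
\]
which is independent of $x$. Substituting $x=R(\tau)$ then gives \eqref{fdd}. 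The constancy of the conditional expectation in \eqref{fdd} shows that the finite-dimensional distributions of $(X(\tau,a))_a$ do not depend on $\mathcal{F}_\tau$, while their common value identifies the law as that of a BESQ$(\mu)$; together with $X(\tau,0)=R(\tau)/R(\tau)=1$ this proves the theorem.

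The main obstacle is one of rigor rather than of a new idea: one must ensure that the scaling identity is invoked as an equality of joint laws at all the chosen time points simultaneously (not merely marginally), and that the passage from the finite-dimensional conditional distributions to the law of the whole continuous process is legitimate. Both are routine given the template already set by Theorem \ref{Besa} and Corollary \ref{cor1}; indeed the present statement is the natural promotion of those results from a fixed time $t$ to a stopping time $\tau$, with the ordinary Markov property replaced by its strong form exactly as announced in Remark \ref{stop}.
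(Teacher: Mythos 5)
Your proof is correct, but it follows a genuinely different route from the paper's. The paper proves Theorem \ref{BesaM} by the classical discretization template: it notes that the claim holds for stopping times with countably many values (by reduction to the fixed-time Theorem \ref{Besa}), approximates a general $\tau$ by the dyadic stopping times $\tau_n = ([2^n\tau]+1)/2^n \downarrow \tau$, and concludes by path continuity and the monotone class theorem — essentially re-running the standard proof of the strong Markov property for the specific functional $X$. You instead invoke the already-available strong Markov property of the Feller process $R$ directly, applied to the path functional $\Phi(\omega)=\prod_i f_i(\omega(a_i\omega(0))/\omega(0))$, and then kill the dependence on the starting point by the scaling property applied at the joint level. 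This is the same mechanism the paper itself uses at the end of the proof of Corollary \ref{cor1} for the one-dimensional marginal; you promote it to finite-dimensional distributions, which is exactly what is needed and is arguably cleaner: it avoids the approximation and convergence step entirely, and delivers the law and the independence in a single identity. What the paper's route buys is that it leans only on the fixed-time result already proved; what yours buys is directness, at the modest cost of having to check that $\Phi$ is a measurable functional on the canonical space (it is, by joint measurability of $(t,\omega)\mapsto\omega(t)$ for continuous paths) and that the scaling identity is used as an equality of laws of whole processes, which you explicitly flag. Both arguments share the same unaddressed edge case as the paper, namely the event $\{\tau=\infty\}$ for transient $\mu>0$, on which $X(\tau,\cdot)$ is undefined; this is inherited from the statement rather than introduced by your proof.
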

\begin{proof} From Theorem \ref{Besa} the assertion holds for $\tau$ taking values in a countable set. To get the general case we introduce $\tau_n = \frac{[2^n\tau] +1}{2^n}, n\geq 0$ which is a sequence of stopping times with values in countable set and decreasing to $\tau$. To finish the proof we proceed as in proof of strong Markov property and applicate the monotone class Theorem (see Theorem 3.1 Chapter III in \cite{RY}).
\end{proof}

Having constructed a new squared Bessel process $(\check{R})_{a\geq 0} = (X(t,a))_{a\geq 0}$ independent of a given filtration $(\mathcal{F}_t)$ we may wonder if some analogues of reflection principle for Brownian motion occur for BESQ. As we can see in the next theorem the answer is positive. Moreover, we will see in the Section 4 that this analogue of principle reflection has very strong consequence, as it allows in completely new way establish the density of the first hitting time of squared Bessel process.

\begin{theorem} \label{refprinc} Let $R,\hat{R}$ be two independent copies of BESQ with index $\mu\geq 0$ and starting from $1$. 
Fix $t>0$.
Then for every $0< y \leq b$ 
\begin{equation}
	\mathbb{P}(R(t) \geq b) = \mathbb{P}\Big(\hat{R}\Big(\frac{t-\tau_y}{y}\Big)\geq \frac{b}{y}, \tau_y \leq t\Big).
\end{equation}
\end{theorem}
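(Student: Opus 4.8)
The plan is to read the right-hand side as a statement about $R$ itself observed after the stopping time $\tau_y$, and to use the Bessel time process to trade the post-$\tau_y$ evolution of $R$ for the independent copy $\hat R$. First I would introduce $\check R_a = X(\tau_y,a) = R(\tau_y + aR(\tau_y))/R(\tau_y)$. By Theorem~\ref{BesaM} the process $(\check R_a)_{a\ge 0}$ is a BESQ($\mu$) started at $1$ and independent of $\mathcal{F}_{\tau_y}$. Because $R$ has continuous paths and $\mu\ge 0$ makes $\{0\}$ polar, on $\{\tau_y<\infty\}$ we have $R(\tau_y)=y$, so that $\check R_a = R(\tau_y + ay)/y$.

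The algebraic heart of the argument is that on $\{\tau_y\le t\}$ the admissible index $a=(t-\tau_y)/y\ge 0$ satisfies $\tau_y + ay = t$, whence
\[
  \check R_{\frac{t-\tau_y}{y}} = \frac{R(t)}{y} \qquad \text{on } \{\tau_y\le t\},
\]
and therefore
\[
  \{R(t)\ge b\}\cap\{\tau_y\le t\} = \Big\{\check R_{\frac{t-\tau_y}{y}}\ge \tfrac{b}{y}\Big\}\cap\{\tau_y\le t\}.
\]
Now $\tau_y$ and the indicator $1_{\{\tau_y\le t\}}$ are $\mathcal{F}_{\tau_y}$-measurable, while $\check R$ is independent of $\mathcal{F}_{\tau_y}$ and equal in law to $\hat R$; hence $(\tau_y,\check R)$ and $(\tau_y,\hat R)$ have the same joint law, and feeding the random time-argument $(t-\tau_y)/y$ into either process gives the same probability. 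This yields
\[
  \mathbb{P}\Big(\check R_{\frac{t-\tau_y}{y}}\ge \tfrac{b}{y},\,\tau_y\le t\Big) = \mathbb{P}\Big(\hat R\Big(\tfrac{t-\tau_y}{y}\Big)\ge \tfrac{b}{y},\,\tau_y\le t\Big),
\]
so the right-hand side of the theorem equals $\mathbb{P}(R(t)\ge b,\ \tau_y\le t)$.

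It then remains to remove the constraint $\tau_y\le t$ on the left. For this I would use path continuity: since $R(0)=1\le y\le b$, any trajectory with $R(t)\ge b\ge y$ must cross the level $y$ before time $t$ by the intermediate value theorem, so $\{R(t)\ge b\}\subseteq\{\tau_y\le t\}$ up to a null set and $\mathbb{P}(R(t)\ge b)=\mathbb{P}(R(t)\ge b,\ \tau_y\le t)$. Combining the two identities closes the proof. I expect the main obstacle to be the rigorous justification of the substitution of $\hat R$ for $\check R$: one must argue through conditioning on $\mathcal{F}_{\tau_y}$ that the random time $(t-\tau_y)/y$ may legitimately be inserted into an independent copy, i.e. that the joint law of the pair $(\tau_y,\check R)$ is that of $(\tau_y,\hat R)$ with $\hat R\perp\tau_y$. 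The crossing step is routine but does rely on the reflected level $y$ lying at or above the starting value $R(0)=1$.
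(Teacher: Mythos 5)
Your proof is correct and follows essentially the same route as the paper's: drop to the event $\{\tau_y\le t\}$ by path continuity, rewrite $R(t)/y$ as $\check{R}_{(t-\tau_y)/y}$ with $\check{R}_a=R(\tau_y+ay)/y$ independent of $\mathcal{F}_{\tau_y}$ (Theorem~\ref{BesaM} / Remark~\ref{stop}), and then replace $\check{R}$ by the independent copy $\hat{R}$. Your closing observation that the crossing step $\{R(t)\ge b\}\subseteq\{\tau_y\le t\}$ genuinely requires $y\ge R(0)=1$ is a point worth making explicit, since the theorem as stated only assumes $0<y\le b$ and the paper's proof silently uses the same inclusion.
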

\begin{proof} For $0< y \leq b$, we have
\begin{align*}
	\mathbb{P}(R(t) \geq b) &= \mathbb{P}(R(t) \geq b, \tau_y\leq t)\\
	&= \mathbb{P}\Big(\frac{R(\tau_y + a^yy)}{y}\geq \frac{b}{y}, \tau_y \leq t\Big),
\end{align*}
where $a^y = (t-\tau_y)/y\in\mathcal{F}_{\tau_y}$ is well defined and nonnegative on $\{\tau_y \leq t\}$. For every $a\geq 0$ from the last remark $\check{R}_a = \frac{R(\tau_y +ay)}{y}$ is independent of $\mathcal{F}_{\tau_y}$. Thus
\begin{align*}
\mathbb{P}\Big(\frac{R(\tau_y+a^yy)}{y}\geq \frac{b}{y}, \tau_y \leq t\Big) &= \mathbb{P}\Big(\check{R}_{a^y}\geq \frac{b}{y}, \tau_y \leq t\Big)\\
	&= \mathbb{P}\Big(\hat{R}\Big(\frac{t-\tau_y}{y}\Big)\geq \frac{b}{y}, \tau_y \leq t\Big).
\end{align*}
\end{proof}

\ \\
In fact we can state more about the path of BESQ after hitting the point $y>0$:
\begin{proposition}\label{StopTime}	For any $\alpha \geq 1$
\begin{equation}
	R(\alpha\tau_y) = y\check{R}_{\tau_y\frac{\alpha-1}{y}},
\end{equation}
where $\check{R}$ is BESQ($\mu$) independent of $\mathcal{F}_{\tau_y}$. 
\end{proposition}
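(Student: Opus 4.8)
The plan is to recognize the displayed identity as nothing more than a pathwise rewriting of the definition of $\check{R}$ attached to the stopping time $\tau_y$, evaluated at a particular $\mathcal{F}_{\tau_y}$-measurable argument. By Remark \ref{stop} together with Theorem \ref{BesaM}, applied to the stopping time $\tau = \tau_y$ (for which $R(\tau_y) = y$ on $\{\tau_y < \infty\}$), the process
\[
\check{R}_a = \frac{R(\tau_y + ay)}{y}, \qquad a \geq 0,
\]
is a BESQ$(\mu)$ starting from $1$ and independent of $\mathcal{F}_{\tau_y}$. The key structural observation I would isolate first is that this is a \emph{pathwise} identity: for each trajectory and every fixed $a \geq 0$ one has $y\,\check{R}_a = R(\tau_y + ay)$, so the definition already encodes the relation we are after.

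Next I would carry out the elementary algebraic matching. Writing $\alpha\tau_y = \tau_y + (\alpha-1)\tau_y$ and comparing $(\alpha-1)\tau_y$ with $ay$, the correct argument is $a = \tau_y\frac{\alpha-1}{y}$, which is nonnegative precisely because $\alpha \geq 1$ and $y > 0$. Substituting this value into the pathwise identity gives
\[
y\,\check{R}_{\tau_y\frac{\alpha-1}{y}} = R\Big(\tau_y + \tau_y(\alpha-1)\Big) = R(\alpha\tau_y),
\]
which is the claimed equality, while the distributional part of the statement is exactly the conclusion of Theorem \ref{BesaM}.

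The only point requiring genuine care, and the step I would flag as the main obstacle, is that the time at which $\check{R}$ is evaluated, namely $a = \tau_y\frac{\alpha-1}{y}$, is itself random and $\mathcal{F}_{\tau_y}$-measurable rather than a deterministic constant. I would emphasize that this causes no difficulty: because the identity $y\,\check{R}_a = R(\tau_y + ay)$ holds simultaneously for all $a \geq 0$ along each fixed trajectory, one may legitimately insert any value of $a$, in particular the $\mathcal{F}_{\tau_y}$-measurable one above, without invoking the law of $\check{R}$. The independence of $\check{R}$ from $\mathcal{F}_{\tau_y}$ is then read off directly from Theorem \ref{BesaM}, so no additional probabilistic argument is needed, and the whole proof reduces to the substitution displayed above (understood on the event $\{\tau_y < \infty\}$).
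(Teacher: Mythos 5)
Your proof is correct and follows essentially the same route as the paper: identify $\check{R}_a = X(\tau_y,a) = R(\tau_y+ay)/y$ as a BESQ$(\mu)$ independent of $\mathcal{F}_{\tau_y}$ via Theorem \ref{BesaM}, then substitute the $\mathcal{F}_{\tau_y}$-measurable value $a = \tau_y\frac{\alpha-1}{y}$ into the pathwise identity. Your explicit remark that the substitution is purely pathwise and needs no appeal to the law of $\check{R}$ is a slightly cleaner articulation of the same step the paper performs.
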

\begin{proof}From Corollary \ref{cor1} and Theorem \ref{BesaM} $(\check{R}_a,a\geq 0) = (X(\tau_y,a),a\geq 0)$ is the squared Bessel process with index $\mu$ and independent of $\mathcal{F}_{\tau_y}$. From Definition \ref{BTP} we have
$$
	\check{R}_a  = X(\tau_y,a) = \frac{R(\tau_y+ay)}{y}.
$$
As $\check{R}_a$ is independent of $\mathcal{F}_{\tau_y}$ we have for $a = \tau_y\frac{\alpha-1}{y}$
$$
	\check{R}_{\tau_y\frac{\alpha-1}{y}} =  \frac{R(\alpha \tau_y)}{y}.
$$
\end{proof}

\begin{proposition} \label{plus} For fixed $t\geq 0$ and any $s\geq 0$
\begin{equation}
	R(t + s) =\check{R}^{R(t)}_s,
\end{equation}
where $\check{R}^x$ is BESQ($\mu$) independent of $R$ starting from $x\geq 0$.
\end{proposition}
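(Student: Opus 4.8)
The plan is to exhibit $\check{R}^x$ explicitly from the Bessel time process together with the scaling property of BESQ, and then to recover the identity by substituting the random starting point $x=R(t)$. For $x>0$ I would define
$$
	\check{R}^x_s = x\,X\!\left(t,\frac{s}{x}\right),\qquad s\geq 0 ,
$$
and take the single process $a\mapsto X(t,a)$ as the source of randomness generating the whole family $\{\check{R}^x\}_{x>0}$. Since the index is nonnegative the point $\{0\}$ is polar, so $R(t)>0$ almost surely and it suffices to define $\check{R}^x$ for $x>0$.

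First I would check that each $\check{R}^x$ is a BESQ($\mu$) issued from $x$. By Theorem~\ref{Besa} the process $a\mapsto X(t,a)$ is a BESQ($\mu$) starting from $1$ and independent of $\mathcal{F}_t$, so it plays the role of $R^1$. The scaling property quoted in the proof of Corollary~\ref{cor1} gives $\big(\tfrac1x R^x(xt)\big)_{t}\stackrel{(law)}{=}\big(R^1(t)\big)_t$, equivalently $R^x(s)\stackrel{(law)}{=}x\,R^1(s/x)$; hence the rescaling $s\mapsto x\,X(t,s/x)$ turns the unit BESQ $X(t,\cdot)$ into a BESQ($\mu$) issued from $x$. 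Because this rescaling only reparametrises the time variable $s$ while leaving $X(t,\cdot)$ untouched, and $X(t,\cdot)$ is independent of $\mathcal{F}_t$, the entire family $\{\check{R}^x\}_{x>0}$ is independent of $\mathcal{F}_t$. This is the sense in which $\check{R}^x$ is independent of $R$: more precisely it is independent of the past $\mathcal{F}_t$, exactly as in Remark~\ref{stop}.

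Next I would verify the identity by plugging in the random index $x=R(t)$. Directly from Definition~\ref{BTP},
$$
	\check{R}^{R(t)}_s = R(t)\,X\!\left(t,\frac{s}{R(t)}\right)
	= R(t)\,\frac{R\!\left(t+\frac{s}{R(t)}\,R(t)\right)}{R(t)} = R(t+s),
$$
which is the claimed equality. The one point requiring care, and the main conceptual obstacle, is the legitimacy of evaluating the random field $x\mapsto\check{R}^x$ at the random argument $x=R(t)$: since $R(t)$ is $\mathcal{F}_t$-measurable while the family $\{\check{R}^x\}_{x>0}$ is independent of $\mathcal{F}_t$, the composition is a genuine BESQ($\mu$) started from the random point $R(t)$ and driven by noise independent of $\mathcal{F}_t$, so the substitution is justified by a routine freezing argument (condition on $\mathcal{F}_t$, use independence to treat $R(t)$ as a constant, then integrate). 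This is exactly what upgrades the elementary identity $\check{R}^{R(t)}_s=R(t+s)$ into the assertion that the post-$t$ evolution of $R$ is a BESQ issued from $R(t)$ whose driving randomness is independent of the past.
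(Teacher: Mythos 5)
Your proposal is correct and follows essentially the same route as the paper: both rest on Theorem~\ref{Besa} for the independence of $(X(t,a))_{a\ge 0}$ from $\mathcal{F}_t$, the pathwise identity $R(t+s)=R(t)\,X(t,s/R(t))$, and the scaling property to read $x\,X(t,s/x)$ as a BESQ($\mu$) issued from $x$. Your explicit freezing argument for evaluating the family at the random point $x=R(t)$ only spells out what the paper leaves implicit in its final sentence.
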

\begin{proof} Observe that $(X(t,a),a\geq 0)$ is independent of $\mathcal{F}_t$ by Theorem \ref{Besa}. For any $s\geq 0$ and $a = \frac{s}{R(t)}$ we have
$$
	\frac{R(t+s)}{R(t)} =	X\Big(t,\frac{s}{R(t)}\Big) = \check{R}_{\frac{s}{R(t)}},
$$
where $\check{R}$ is independent of $\mathcal{F}_t$ squared Bessel process. From the last equality
$$
	R(t+s) = R(t)\check{R}_{\frac{s}{R(t)}}.
$$
Assertion follows now from scaling property of BESQ and independency of process $\check{R}$ and random variable $R(t)$.
\end{proof}

\begin{proposition} For fixed $t\geq 0$ and any bounded measurable functional
$F_t$ on $\mathcal{C}([0, t];\mathbb{R}_+)$ 
\begin{equation}
	\mathbb{E}F_t(R) R(t+aR(t)) = (1+\delta a)\mathbb{E}F_t(R)R(t).
\end{equation}
In result
\begin{equation}
	\mathbb{E}(R(t+aR(t))|\mathcal{F}_t) = (1+\delta a)R(t)
\end{equation}
and
\begin{equation}
	\mathbb{E}R(t+aR(t)) = (1+\delta a)(1+\delta t).
\end{equation}
\end{proposition}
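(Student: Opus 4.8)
The plan is to reduce all three identities to one structural fact already available from Corollary \ref{cor1}: the value $R(t+aR(t))$ factors as an $\mathcal{F}_t$-measurable quantity times an independent Bessel-time quantity. Directly from Definition \ref{BTP} one has the exact identity
\[
	R(t+aR(t)) = R(t)\,X(t,a),
\]
and by Corollary \ref{cor1} the factor $X(t,a)$ is independent of $\mathcal{F}_t$ with $X(t,a)\stackrel{(law)}{=}R(a)$. The first thing I would record is the mean of this factor: a BESQ of index $\mu$ is a BESQ of dimension $\delta=2(\mu+1)$, so $R(a)-\delta a$ is a martingale started from $R(0)=1$ and hence $\mathbb{E}X(t,a)=\mathbb{E}R(a)=1+\delta a$.

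With this in hand I would prove the conditional identity first. Since $R(t)$ is $\mathcal{F}_t$-measurable while $X(t,a)$ is independent of $\mathcal{F}_t$ and integrable, pulling out the known factor gives
\[
	\mathbb{E}\big(R(t+aR(t))\,\big|\,\mathcal{F}_t\big)=R(t)\,\mathbb{E}\big(X(t,a)\,\big|\,\mathcal{F}_t\big)=R(t)\,\mathbb{E}X(t,a)=(1+\delta a)R(t),
\]
which is the middle equation. The first equation then follows at once: for any bounded measurable $F_t$ the variable $F_t(R)$ is $\mathcal{F}_t$-measurable, so multiplying the conditional identity by $F_t(R)$ and taking expectations yields $\mathbb{E}F_t(R)R(t+aR(t))=(1+\delta a)\mathbb{E}F_t(R)R(t)$. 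Finally, choosing $F_t\equiv 1$ and using $\mathbb{E}R(t)=1+\delta t$ (again the drift of BESQ started from $1$) gives $\mathbb{E}R(t+aR(t))=(1+\delta a)(1+\delta t)$.

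I do not expect a genuinely hard step, since the substance has been front-loaded into Theorem \ref{Besa} and Corollary \ref{cor1}; what remains is bookkeeping. Two points deserve care. First, the integrability needed to split the product $R(t)X(t,a)$ across the conditioning: both factors have finite first moment, they are independent, and $F_t$ is bounded, so the factorisation is legitimate and $R(t+aR(t))$ is integrable. Second, invoking the normalisation that links the index $\mu$ to the drift $\delta$, which is exactly what makes the constant $1+\delta a$ emerge. The one place where the earlier theory is genuinely consumed is this independence-out-of-conditioning step, so that is the step I would spell out in full.
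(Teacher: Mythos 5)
Your proof is correct and rests on exactly the same ingredients as the paper's: the factorisation $R(t+aR(t))=R(t)X(t,a)$, the independence of $X(t,a)$ from $\mathcal{F}_t$ supplied by Corollary \ref{cor1}, and $\mathbb{E}X(t,a)=\mathbb{E}R(a)=1+\delta a$. The only difference is cosmetic ordering (you derive the conditional identity first and integrate against $F_t(R)$ afterwards, whereas the paper computes $\mathbb{E}(F_t(R)R(t)X(t,a))$ directly), so this is essentially the paper's argument.
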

\begin{proof} From Corollary \ref{cor1} $X(t,a)$ is independent of $\mathcal{F}_t$. Thus
\begin{align*}
	\mathbb{E}(F_t(R)R(t+aR(t))) &= \mathbb{E}(F_t(R)R(t)X(t,a))\\
	 &= \mathbb{E}(F_t(R)R(t))\mathbb{E}X(t,a)\\
	 &= (1+\delta a)\mathbb{E}(F_t(R)R(t)).
\end{align*}
\end{proof}

The application of the squared Bessel time process leads us to the construction of two correlated squared Bessel processes with explicitly known joint distribution.
In fact it is the answer to the question how the joint distribution of the two correlated Bessel processes looks like. The question arose in financial mathematics and is strictly connected with the stochastic volatility models, where an asset price process and its variance are two correlated stochastic processes (see \cite{JYC}). For instance popular among practitioners CIR process is a squared Bessel process with deterministic changed time. However, as far as we know, the joint distribution of two correlated CIR processes is not known. Although the correlation structure in stochastic volatility models is usually imposed on the dynamics of the two processes, we belive our construction gives some new light on this subject. 
\begin{theorem} 
For a fixed $t\geq 0$ define
\begin{align*}
(\check{R}_a, a\geq 0) &= (X(t,a),a\geq 0),\\ 
(U_a,a \geq 0) &= (R(t+a),a \geq 0).
\end{align*}
Then $(\check{R}_a, U_a)$ is the pair of the two corellated squared Bessel processes with index $\mu$ and covariance given by
\begin{equation}
	Cov(\check{R}_a, U_a) = \mathbb{E}\Big[R(t)H\Big(a,\frac{a}{R(t)},\delta\Big)\Big] - (1+\delta a)(1 +\delta t + \delta a),
\end{equation}
where
\begin{equation}
	H(s,t,\delta) = 1 + (t\wedge s)(\delta +4) + 2\delta (t\wedge s)^2 + \delta (t\vee s)(1+ \delta (t\wedge s)).
\end{equation}
Moreover for any positive, Borel functions $f,g$ and $R^{(1)}, R^{(2)}, R^{(3)}$ three independent, starting from $1$ BESQ processes with index $\mu$ we have
\begin{align*}
	\mathbb{E}f(\check{R}_a)g(U_a) &= \mathbb{E}1_{\{R^{(1)}_t\geq 1\}}f\Big(R^{(2)}_{a/R^{(1)}_t}R^{(3)}_{a(R^{(1)}_t-1)/(R^{(1)}_tR^{(2)}_{a/R^{(1)}_t})}\Big)g\Big(R^{(1)}_tR^{(2)}_{a/R^{(1)}_t}\Big)\\
	&+ \mathbb{E}1_{\{ R^{(1)}_t < 1\}}f(R^{(2)}_a)g\Big(R^{(1)}_tR^{(2)}_aR^{(3)}_{(1-R^{(1)}_t)a/(R^{(1)}_tR^{(2)}_a)}\Big).
\end{align*}
\end{theorem}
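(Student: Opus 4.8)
The plan is to reduce everything to a single conditioning on $\mathcal{F}_t$ and then to exploit the scaling and Markov structure of BESQ. First I would record the two basic structural facts. By Theorem \ref{Besa} the process $(\check{R}_a)_{a\geq 0}=(X(t,a))_{a\geq 0}$ is a BESQ($\mu$) started from $1$ and independent of $\mathcal{F}_t$, while by the simple Markov property $(U_a)_{a\geq 0}=(R(t+a))_{a\geq 0}$ is a BESQ($\mu$) started from the random point $R(t)$; this already gives the first assertion that the pair consists of two correlated squared Bessel processes of index $\mu$. The correlation is encoded by a single algebraic identity tying them together: writing $W_s:=R(t+s)$ for the post-$t$ process, which given $R(t)=x$ is a BESQ($\mu$) started from $x$, one has $U_a=W_a$ and $\check{R}_a=X(t,a)=W_{ax}/x$. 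Thus both processes are read off the same BESQ $W$, but at the two (generally different) times $a$ and $ax$, and the whole theorem becomes a statement about the two-time behaviour of $W$.

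For the covariance I would first compute the means, $\mathbb{E}\check{R}_a=1+\delta a$ and $\mathbb{E}U_a=1+\delta(t+a)$, so that the subtracted product $(1+\delta a)(1+\delta t+\delta a)$ appears at once. For the cross moment I condition on $R(t)=x$ and use the scaling property $W_s=x\bar{W}_{s/x}$, with $\bar{W}$ a BESQ($\mu$) started from $1$, to obtain $\mathbb{E}[\check{R}_aU_a\mid R(t)=x]=\tfrac{1}{x}\mathbb{E}[W_{ax}W_a]=x\,\mathbb{E}[\bar{W}_a\bar{W}_{a/x}]$. It then remains to identify $H(s,u,\delta)$ with the two-time second moment $\mathbb{E}[\bar{W}_s\bar{W}_u]$ of a BESQ($\mu$) started from $1$. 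This I would get from the standard BESQ moment relations: $\mathbb{E}\bar{W}_s=1+\delta s$; the second moment $\mathbb{E}\bar{W}_s^2=1+(2\delta+4)s+(\delta^2+2\delta)s^2$ via It\^o's formula applied to $\bar{W}^2$; and, for $s\leq u$, $\mathbb{E}[\bar{W}_s\bar{W}_u]=\mathbb{E}\bar{W}_s^2+\delta(u-s)\mathbb{E}\bar{W}_s$ by the Markov property, which after simplification is exactly the symmetric expression defining $H$. Taking expectation in $x=R(t)$ then yields $\mathbb{E}[\check{R}_aU_a]=\mathbb{E}[R(t)H(a,a/R(t),\delta)]$, and the covariance formula follows.

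For the joint law I would again condition on $R(t)=x$, now recording $x=R^{(1)}_t$ so that $R^{(1)}$ plays the role of $R$ on $[0,t]$ and $R^{(1)}_t$ carries the correct law of $R(t)$. With $U_a=W_a$ and $\check{R}_a=W_{ax}/x$ as above, the key is to split the path of $W$ at the intermediate time $a\wedge ax$ and to use Proposition \ref{plus} together with scaling to realise the two pieces over disjoint intervals as independent copies $R^{(2)},R^{(3)}$ of a BESQ($\mu$) started from $1$. On $\{x\geq 1\}$ (equivalently $ax\geq a$) I would scale the first piece $(W_s)_{0\leq s\leq a}$ to write $U_a=W_a=xR^{(2)}_{a/x}$, and then, by Proposition \ref{plus} applied at time $a$, realise the increment over $[a,ax]$ (duration $a(x-1)$, started from $W_a$) as $W_{ax}=W_aR^{(3)}_{a(x-1)/W_a}$; dividing by $x$ and substituting $W_a=xR^{(2)}_{a/x}$ produces precisely the first summand. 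On $\{x<1\}$ (so $ax<a$) I would reverse the roles: scale the first piece over $[0,ax]$ to write $\check{R}_a=W_{ax}/x=R^{(2)}_a$, then realise the increment over $[ax,a]$ (duration $a(1-x)$, started from $W_{ax}=xR^{(2)}_a$) as $W_a=W_{ax}R^{(3)}_{a(1-x)/W_{ax}}$, giving the second summand. Collecting the two cases and taking expectations yields the stated formula.

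The routine parts are the two means and the It\^o computation of $\mathbb{E}\bar{W}_s^2$; the genuinely delicate points, which I expect to be the main obstacle, are twofold. First, one must verify that the elementary two-time moment really collapses to the symmetric closed form $H(s,u,\delta)$, including the correct treatment of $s\wedge u$ and $s\vee u$, which is exactly what lets the single function $H$ serve simultaneously for $x\geq 1$ and $x<1$. Second, in the joint-law decomposition one must carry the nested scaling factors faithfully, most notably the compound time argument $a(x-1)/(xR^{(2)}_{a/x})$, and justify that the two pieces are genuinely independent BESQ($\mu$) processes started from $1$; the latter is precisely the content of Proposition \ref{plus} applied to the post-$t$ process, so the independence is not an extra assumption but a consequence of the strong Markov and scaling properties already established.
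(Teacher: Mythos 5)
Your proposal is correct and follows essentially the same route as the paper: both identify $U_a=R(t)\check{R}_{a/R(t)}$, reduce the cross-moment to the two-time second moment $H$ of a unit-started BESQ conditionally on $R(t)$, and obtain the joint law by splitting the post-$t$ path at the intermediate time $a\wedge aR(t)$. The only cosmetic difference is that the paper carries out this last decomposition through the triple transition-density integral and changes of variables, while you phrase it at the process level via Proposition \ref{plus} and scaling; the underlying argument is the same.
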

\begin{proof} From Theorem \ref{Besa} $\check{R}_a$ is a squared Bessel process with index $\mu$ independent of $\mathcal{F}_t$. Thus $\check{R}_a$ and $R(t)$ are independent, and in result from scaling property of BESQ, for fixed $t\geq 0$ the process $(U_a = R(t)\check{R}_{\frac{a}{R(t)}},a\geq 0)$ is BESQ with index $\mu$ starting from the random point $R(t)$. From the definition of $\check{R}_a$ we have
$$U_a = R(t)\check{R}_{\frac{a}{R(t)}} = R(t)\frac{R(t+a)}{R(t)} = R(t+a).$$
 Observe that from Markov property for a BESQ $R$ starting from $1$ and $t,s\geq 0$ we obtain
\begin{align*}
	\mathbb{E}R(t)R(s) &= \mathbb{E}\Big(R(t\wedge s)\mathbb{E}\Big[R(t\vee s)|\mathcal{F}^R_{t\wedge s}\Big]\Big)\\
	 &= \mathbb{E}\Big(R(t\wedge s)\mathbb{E}_{R(t\wedge s)}R(t\vee s - t\wedge s)\Big)\\
	&= \mathbb{E}R^2(t\wedge s) + \delta (t\vee s - t\wedge s)(1+\delta (t\vee s))\\
	&= 1 + (\delta + 2)(2 +\delta (t\vee s))(t\vee s) + \delta (t\vee s - t\wedge s)(1+\delta (t\vee s))\\
	&= 1 + (t\wedge s)(\delta +4) + 2\delta (t\wedge s)^2 + \delta (t\vee s)(1+ \delta (t\wedge s))\\
	 &= H(s,t,\delta),
\end{align*}
where we used the fact that for 
$$R(t) = 1 +2\int_0^t\sqrt{R(u)}dB_u +\delta t,$$
we have
\begin{align*}
	\mathbb{E}R^2(s) = 1 + 2(\delta+2)\int_0^s\mathbb{E}R(u) du
\end{align*}
and the definition of function $H$.
Observe that
\begin{align*}
	\mathbb{E}\check{R}_a U_a &= \mathbb{E}\Big(R(t)\mathbb{E}\Big(\check{R}_a\check{R}_{\frac{a}{R(t)}}\Big|R(t)\Big)\Big)\\
	 &= \mathbb{E}\Big(R(t)H\Big(a,\frac{a}{R(t)},\delta\Big)\Big).
\end{align*}
In result from the last equality we obtain
\begin{align*}
	Cov(\check{R}_a, U_a) &= \mathbb{E}\check{R}_a U_a - \mathbb{E}\check{R}_a\mathbb{E}U_a\\
	 &=  \mathbb{E}\Big(R(t)H\Big(a,\frac{a}{R(t)},\delta\Big)\Big) - (1+\delta a)(1 +\delta t + \delta a).
\end{align*}
Now let $f,g$ be two positive, Borel functions. We have
\begin{align*}
	\mathbb{E}&f(\check{R}_a)g(U_a) = \mathbb{E}\Big[f\Big(\frac{R(t+aR(t))}{R(t)}\Big)g(R(t+a))\Big]\\
	&= \mathbb{E}\Big[f\Big(\frac{R(t+aR(t))}{R(t)}\Big)g(R(t+a))1_{\{R(t)\geq 1\}}\Big]\\
	 &+ \mathbb{E}\Big[f\Big(\frac{R(t+aR(t))}{R(t)}\Big)g(R(t+a))1_{\{R(t)< 1\}}\Big]\\
	&= I + II.
\end{align*}
We have further
\begin{align*}
	I &= \int_0^\infty ..\int_0^\infty 1_{\{x^2\geq 1\}}f\Big(\frac{y^2}{x^2}\Big)g(z^2)p_t(1,x)p_{a}(x,z)p_{a(x^2-1)}(z,y)dxdydz\\
	&= \int_0^\infty ..\int_0^\infty 1_{\{x^2\geq 1\}}f\Big((\tilde{z}\tilde{y})^2\Big)g\Big((\tilde{z}x)^2\Big)p_t(1,x)p_{\frac{a}{x^2}}(1,\tilde{z})p_{\frac{a(x^2-1)}{(x^2\tilde{z}^2)}}(1,\tilde{y})dxd\tilde{z}d\tilde{y}.
\end{align*}
Thus
\begin{align*}
	I = \mathbb{E}\Big[1_{\{R^1_t\geq 1\}}f\Big(R^2_{a/R^1_t}R^3_{a(R^1_t-1)/(R^1_tR^2_{a/R^1_t})}\Big)g\Big(R^1_tR^2_{a/R^1_t}\Big)\Big].
\end{align*}
In the same way we obtain
\begin{align*}
	II &= \mathbb{E}\Big[f\Big(\frac{R(t+aR(t))}{R(t)}\Big)g(R(t+a))1_{\{R(t)< 1\}}\Big]\\
	&= \int_0^\infty ..\int_0^\infty 1_{\{x^2< 1\}}f\Big(\frac{y^2}{x^2}\Big)g(z^2)p_t(1,x)p_{ax^2}(x,y)p_{a(1-x^2)}(y,z)dxdydz\\
	&= \int_0^\infty ..\int_0^\infty 1_{\{x^2< 1\}}f\Big((\tilde{y})^2\Big)g\Big((\tilde{y}\tilde{z}x)^2\Big)p_t(1,x)p_a(1,\tilde{y})p_{\frac{a(1-x^2)}{x^2\tilde{y}^2}}(1,\tilde{z})dxd\tilde{z}d\tilde{y}\\
	&= \mathbb{E}\Big[1_{\{R^1_t< 1\}}f(R^2_a)g\Big(R^1_tR^2_aR^3_{a(1-R^1_t)/(R^1_tR^2_a)}\Big)\Big].
\end{align*}
\end{proof}
The next important result is another version of time inversion for squared Bessel process (for a short review of time reversal/inversion theory see for instance Chapter VII Par. 4 in \cite{RY} or \cite{MYII}). In fact it is an answer to the following question: can we obtain any equivalence between the BESQ process and the BESQ process of the same index but with inverted time? The answer is positive. We can achieve it by making the starting point of BESQ random.

\begin{theorem} Let $R^x$ be BESQ with index $\mu\geq 0$ and starting from $x> 0$. Let $\hat{R}(t) =t^2R\Big(\frac{1}{t}\Big)$. Then for any fixed $t\geq 0$ the following equivalence in law holds
\begin{equation}
 \Big(t^2R\Big(\frac{1}{t} + a\Big), a\geq 0\Big) \stackrel{(law)}{=} \Big(S^{\hat{R}_t}_{t^2a}, a\geq 0\Big),
\end{equation}
where $S$ is independent of $\hat{R}$ squared Bessel process with index $\mu$.
\end{theorem}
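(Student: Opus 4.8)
The plan is to reduce the statement to the decomposition of Proposition~\ref{plus}, read at the deterministic time $1/t$, followed by a single use of the BESQ scaling property. Throughout fix $t>0$ and write $s=1/t$. The first thing to check is that the whole Bessel-time-process machinery is insensitive to the starting point, so that Proposition~\ref{plus} is available for $R^x$ and not only for the canonical process started at $1$. This is essentially already contained in Proposition~\ref{PMS} and Corollary~\ref{cor1}: the computation there shows that $X^{x}(s,a)=R^x(s+aR^x(s))/R^x(s)$ has, for fixed $a$, the law of $R(a)$ independently of $x$, and the transition-density scaling $p_{ax^2}(y,z)=x^{-2}p_a(y/x,z/x)$ used in Theorem~\ref{Besa} does not see the starting point either. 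Hence $(\check R_a)_{a\ge0}:=(X^{x}(s,a))_{a\ge0}$ is a BESQ$(\mu)$ started from $1$ and independent of $\mathcal F_s$, exactly as in Proposition~\ref{plus}.

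Next I would rewrite the left-hand side. By the very definition of the Bessel time process, taking $a/R^x(s)$ as its time argument gives $R^x(s+a)=R^x(s)\,\check R_{a/R^x(s)}$, so that
\[
 t^2R^x\Big(\tfrac1t+a\Big)=t^2R^x(s)\,\check R_{a/R^x(s)}=\hat R_t\,\check R_{a/R^x(s)},
\]
since $\hat R_t=t^2R^x(1/t)=t^2R^x(s)$. Because $R^x(s)=\hat R_t/t^2$, the random time simplifies to $a/R^x(s)=t^2a/\hat R_t$, and therefore the left-hand process equals $\big(\hat R_t\,\check R_{t^2a/\hat R_t}\big)_{a\ge0}$.

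For the right-hand side I would apply the scaling property of BESQ cited after Corollary~\ref{cor1}: if $S=S^1$ is a BESQ$(\mu)$ started from $1$ and independent of $\hat R$, then $S^{\hat R_t}_{t^2a}=\hat R_t\,S_{t^2a/\hat R_t}$ in law, conditionally on $\hat R_t$. Thus both sides are of the identical form $\hat R_t\,Z_{t^2a/\hat R_t}$, where $Z$ is a BESQ$(\mu)$ started from $1$: on the left $Z=\check R$, on the right $Z=S$. Now $\check R$ is independent of $\mathcal F_s\supseteq\sigma(R^x(s))=\sigma(\hat R_t)$, while $S$ is independent of $\hat R_t$ by assumption, and $\check R$ and $S$ share the law of a standard BESQ$(\mu)$; hence the pairs $(\hat R_t,\check R)$ and $(\hat R_t,S)$ have the same law. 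Applying the common measurable path map $(v,z)\mapsto\big(v\,z_{t^2a/v}\big)_{a\ge0}$ to these two equally distributed pairs yields the asserted equality in law of the two processes.

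The step that needs the most care is the last one: the identity must be established at the level of processes in $a$, not merely for one-dimensional marginals, which is why I insist on comparing the \emph{joint} laws of $(\hat R_t,\check R)$ and $(\hat R_t,S)$ rather than just the conditional law of each side given $\hat R_t$. The only other genuinely non-formal point is the justification that Proposition~\ref{plus} transfers verbatim from the start-$1$ canonical process to $R^x$; once that is granted, the random time change $a\mapsto t^2a/\hat R_t$ and the scaling bookkeeping are routine.
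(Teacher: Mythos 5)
Your proposal is correct and follows essentially the same route as the paper: both evaluate the Bessel time process at time $1/t$, perform the substitution that turns the stochastic clock $aR(1/t)$ into the deterministic increment $a$ (you do this in one step via Proposition~\ref{plus}, the paper in two, via the intermediate identity for $R(1/t+a\hat R_t)$), and conclude with the BESQ scaling property and the independence of $\check R$ from $\mathcal F_{1/t}$. Your added care about the general starting point $x$ and about comparing the joint laws of $(\hat R_t,\check R)$ and $(\hat R_t,S)$ at the level of processes is a welcome tightening of the same argument, not a different one.
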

\begin{proof} By Theorem \ref{Besa} $(X(t,a), a \geq 0)$ is BESQ($\mu$) starting from $1$ and independent of $\mathcal{F}_t$. Define $\hat{R}_t = t^2R\Big(\frac{1}{t}\Big)$. $\hat{R}$ is well defined markovian, transient process starting from $0$. Indeed, there exists a constant $M>0$ such that 
\begin{align*}
	R(t) \leq (1+B^{(1)}_t)^2 + \sum_{i=2}^M (B^{(i)}_t)^2 \leq 2 \Big(1+ \sum_{i=1}^M (B^{(i)}_t)^2\Big),
\end{align*}
where $B^{(i)}, i = 1,..,M$ are independent Brownian motions. Thus
\begin{align*}
t^2R\Big(\frac{1}{t}\Big)&\leq 2t^2 \Big(1+ \sum_{i=1}^M (B^{(i)}_{1/t})^2\Big) = 2t^2 + 2\sum_{i=1}^M \frac{(B^{(i)}_{1/t})^2}{1/t^2} 
\end{align*}
and obviously the last expresion tends to $0$ for $t\downarrow 0$.
For detailed study and the role of $\hat{R}$ in the time-inversion operations see \cite{MYII}. In particular $\hat{R}$ is a square of an upward Bessel process $\hat{r}$ with index $\mu\geq 0$. The processs $\hat{r}$ is a diffusion (see again \cite{MYII}) on $[0,\infty)$ with generator
$$
	G_{\mu} = \frac{1}{2}\frac{d}{dx^2} + \Big(\frac{2\mu+1}{2x}+ \delta \frac{I_{\mu +1}(\delta x)}{I_{\mu}(\delta x)}\Big)\frac{d}{dx}.
$$
For $t>0$ define $(\check{R}_a, a \geq 0) = (X(1/t,a), a \geq 0)$. It is a BESQ($\mu$) independent of $\mathcal{F}_{\frac{1}{t}}$. Now, in $\check{R}_a$, let us put $at^2$ in place of $a$ to obtain
\begin{align*}
	\check{R}_{at^2} = X(1/t,at^2) &= \frac{R\Big(\frac{1}{t} + at^2R\Big(\frac{1}{t}\Big)\Big)}{R\Big(\frac{1}{t}\Big)} = t^2\frac{R\Big(\frac{1}{t} + a\hat{R}_t\Big)}{\hat{R}_t}.
\end{align*}
In result 
\begin{equation} \label{TI}
 \frac{R\Big(\frac{1}{t} + a\hat{R}_t\Big)}{\hat{R}_t} = \frac{1}{t^2}\check{R}_{at^2}.
\end{equation}
Now observe that from Theorem \ref{Besa}, LHS of (\ref{TI}) is independent of $\mathcal{F}_{\frac{1}{t}}$, thus of $\hat{R}_t$. In result, if we replace $a$ with $\frac{a}{\hat{R}_t}$ in (\ref{TI}) we obtain
\begin{align*}
	\frac{R\Big(\frac{1}{t} + a\Big)}{\hat{R}_t} = \frac{1}{t^2}\check{R}_{\frac{at^2}{\hat{R}_t}}.
\end{align*}
Finally from scaling property of BESQ we have
\begin{equation} \label{TII}
	t^2R\Big(\frac{1}{t} + a\Big) = \check{R}^{\hat{R}_t}_{t^2a},
\end{equation}
and the process $\check{R}$ is BESQ($\mu$) independent of $\hat{R}_t$ . 
\end{proof}
\begin{corollary} (Time inversion) For any $t\geq 0$ we have equality in law 
\begin{align*}
t^2R^1\Big(\frac{2}{t}\Big) \stackrel{(law)}{=} \tilde{R}^{\hat{R}(t)}(t)
\end{align*}
where $\tilde{R}$ and $\hat{R}$ are independent. 
\end{corollary}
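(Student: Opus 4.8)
The plan is to obtain this as a one-point specialization of the process-level equivalence in law established in the preceding theorem. That theorem gives, for fixed $t>0$, the equality in law of the whole processes $\big(t^2R(\tfrac{1}{t}+a)\big)_{a\geq 0}$ and $\big(S^{\hat{R}_t}_{t^2a}\big)_{a\geq 0}$, where $S$ is a BESQ($\mu$) independent of $\hat{R}$ and $\hat{R}_t=\hat{R}(t)=t^2R(\tfrac{1}{t})$. In particular, equality in law of the full trajectories forces equality of the one-dimensional marginals at every fixed $a\geq 0$, so it suffices to read off a single value of $a$.

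First I would fix $t>0$ and evaluate both sides at $a=\tfrac{1}{t}$. On the left the argument becomes $\tfrac{1}{t}+\tfrac{1}{t}=\tfrac{2}{t}$, giving $t^2R(\tfrac{2}{t})$, which (recalling that the unadorned $R$ is $R^1$, the BESQ started from $1$) is exactly the left-hand side $t^2R^1(\tfrac{2}{t})$ of the corollary. On the right the time index becomes $t^2\cdot\tfrac{1}{t}=t$, so the corresponding marginal is $S^{\hat{R}_t}_{t}$, that is, the value at the fixed time $t$ of a BESQ($\mu$) started from the random point $\hat{R}_t=\hat{R}(t)$ and independent of $\hat{R}$. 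Renaming $S$ as $\tilde{R}$, the independence of $\tilde{R}$ and $\hat{R}$ being inherited verbatim from the hypothesis of the theorem, yields precisely
\[
t^2R^1\Big(\tfrac{2}{t}\Big)\stackrel{(law)}{=}\tilde{R}^{\hat{R}(t)}(t),
\]
which is the assertion.

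I expect no genuine obstacle here, as all the analytic content already resides in the previous theorem; the corollary is the observation that the additive splitting $\tfrac{2}{t}=\tfrac{1}{t}+\tfrac{1}{t}$ converts the time-inverted BESQ run from a random starting point into its value at the fixed time $t$. The only two points deserving a word of care are that equality in law of processes descends to each fixed marginal (immediate from the definition of equality in law), and that the independence between the driving process $S$ and the random initial condition $\hat{R}_t$ is untouched by the renaming to $\tilde{R}$. For the degenerate endpoint $t=0$ the expressions $\tfrac{2}{t}$ and $\tfrac{1}{t}$ are interpreted in the limiting sense discussed for $\hat{R}$ in the theorem (with $\hat{R}(0)=0$), so the statement is understood for $t>0$.
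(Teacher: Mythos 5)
Your argument is correct and coincides with the paper's own proof: the corollary is obtained by substituting $a=\frac{1}{t}$ into the process-level identity of the preceding theorem, with the degenerate case $t=0$ handled by noting both sides vanish in the limit. No issues.
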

\begin{proof}
It is enough to put $a = \frac{1}{t}$ to obtain from the previous theorem
\begin{equation*}
 	t^2R^1\Big(\frac{2}{t}\Big) \stackrel{(law)}{=} \tilde{R}^{\hat{R}(t)}(t).
\end{equation*}
To finish the proof observe that both sides of the last equality tend to $0$ with $t\rightarrow 0$.
\end{proof}

\section{Bessel time process and geometric Brownian motion}

Although in the previous section we have already seen the surprising applications of the Bessel time process $(X(t,a),a\geq 0)$, one may wonder where the construction of squared Bessel time process can be observed. The answer is: the structure of Bessel time process is observed in decomposition of a geometric Brownian motion.
\begin{proposition} \label{GBM} Fix $h\geq 0$, $\mu \geq 0$. Let $\hat{B}_s = B_{h+s}-B_h, s\geq 0$, where $B$ is a standard Brownian motion. Then
\begin{equation}\label{XA}
	X(A_h, \hat{A}_s) = e^{2\hat{B}_s + 2\mu s},
\end{equation}
where $A_h = \int_0^he^{2B_u + 2\mu u}du$, $\hat{A}_s = \int_0^se^{2\hat{B}_u + 2\mu u}du$.
\end{proposition}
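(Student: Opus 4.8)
The plan is to reduce everything to the squared form of Lamperti's relation and then perform a single change of variables. Recall that Lamperti's relation, in the form relevant here, asserts that $e^{2B_t + 2\mu t} = R(A_t)$ for all $t\geq 0$, where $R$ is the BESQ($\mu$) started from $1$ and $A_t = \int_0^t e^{2B_u + 2\mu u}\,du$; this $R$ is exactly the process entering the definition (\ref{BTP}) of $X$. Since (\ref{XA}) is an equality between two functionals of the \emph{same} Brownian path, no independence or distributional argument is needed: the whole statement is an almost sure pathwise identity that I would verify trajectory by trajectory, which in particular means that feeding the random arguments $A_h$ and $\hat A_s$ into $X$ is just a pointwise evaluation.

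First I would unfold the definition (\ref{BTP}) at the (random) time $t = A_h$ and parameter $a = \hat{A}_s$, giving
$$
X(A_h, \hat{A}_s) = \frac{R\big(A_h + \hat{A}_s\, R(A_h)\big)}{R(A_h)}.
$$
The key step is to identify the inner time argument. Using $R(A_h) = e^{2B_h + 2\mu h}$ from Lamperti together with $\hat{B}_u = B_{h+u} - B_h$, I would compute
$$
\hat{A}_s\, R(A_h) = e^{2B_h + 2\mu h}\int_0^s e^{2\hat{B}_u + 2\mu u}\,du = \int_0^s e^{2B_{h+u} + 2\mu(h+u)}\,du,
$$
where the factors $e^{2B_h}$ and $e^{2\mu h}$ are absorbed inside the integral. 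The substitution $v = h+u$ then turns this into $\int_h^{h+s} e^{2B_v + 2\mu v}\,dv = A_{h+s} - A_h$, so that $A_h + \hat{A}_s\, R(A_h) = A_{h+s}$.

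With this identity in hand, I would apply Lamperti's relation twice more and cancel:
$$
X(A_h, \hat{A}_s) = \frac{R(A_{h+s})}{R(A_h)} = \frac{e^{2B_{h+s} + 2\mu(h+s)}}{e^{2B_h + 2\mu h}} = e^{2(B_{h+s} - B_h) + 2\mu s} = e^{2\hat{B}_s + 2\mu s},
$$
which is exactly (\ref{XA}). The only genuinely delicate point is the bookkeeping: one must pick the correct squared version of Lamperti's relation so that the BESQ $R$ appearing there coincides with the process used to define $X$, and one must track the factor $R(A_h)$ carefully so that the additive functional of the shifted path $\hat B$ matches the increment $A_{h+s} - A_h$ of the original functional. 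Everything else is a routine change of variables, and no measurability or independence considerations enter, precisely because the claim is a pointwise identity of paths.
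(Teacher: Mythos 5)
Your proof is correct and follows essentially the same route as the paper: unfold the definition of $X$ at $t=A_h$, $a=\hat A_s$, use Lamperti's relation $R(A_h)=e^{2B_h+2\mu h}$ to identify $A_h+\hat A_s R(A_h)=A_{h+s}$, and apply Lamperti once more to the quotient. The only cosmetic difference is that you derive the decomposition $A_{h+s}=A_h+e^{2B_h+2\mu h}\hat A_s$ by an explicit change of variables, whereas the paper cites it as known.
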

\begin{proof} Let $R$ be a BESQ($\mu$) such that $R(A_h) = e^{2B_h + 2\mu h}$. The last identity is known in literature as Lamperti's relation (for the detailed study of it see for instance \cite{Y} or \cite{MYIV}). From the definition of $X$ we have
\begin{align*}
	X(A_h, \hat{A}_s) = \frac{R(A_h+\hat{A}_s R(A_h))}{R(A_h)}.
\end{align*}
From the known decomposition $A_{h+s} = A_h + e^{2B_h + 2\mu h}\hat{A}_s$ (see \cite{DMY}, \cite{MYI})and Lamperti's relation
\begin{align*}
	\frac{R(A_h+\hat{A}_s R(A_h))}{R(A_h)} = \frac{R(A_{h+s})}{R(A_h)} = e^{2(B_{h+s}-B_h) + 2\mu s} = e^{2\hat{B}_s + 2\mu s}.
\end{align*}
\end{proof}

\ \\
Note that the process $(e^{2\hat{B}_s + 2\mu s},s\geq 0)$ is independent of $\mathcal{F}^B_h$. In the same time processes $R,B$ are adapted to the filtration $(\mathcal{F}^R_t)$.

\ \\
Let us introduce some useful notation. For a standard Brownian motion $B$ and $h\geq 0$ let
$$
\hat{B}_s(h) = B_{s+h} - B_h.
$$
As $\kappa_h = \inf\{t: \int_0^te^{2B_u +2\mu u}du \geq h\}$ is a stopping time, $(\hat{B}(\kappa_h))$ is another Brownian motion independent of $\mathcal{F}_{\kappa_h}$. Lamperti's relation implies that for any Brownian motion $B$ there exists associated squared Bessel process with index $\mu$ such that
$$
	R(A_t) = e^{2B_t + 2\mu t}, \ A_t = \int_0^te^{2B_u + 2\mu u}du.
$$
The triple $R(t),A_t,e^{2B_u + 2\mu u}$ constitutes then the closed system. 
In particular there exists a BESQ($\mu$) $\hat{R}^h$ such that
$$
	\hat{R}^h(\hat{A}_t(h)) = e^{2\hat{B}_t(h) + 2\mu t}, \ \hat{A}_t(h) = \int_0^te^{2\hat{B}_u(h) + 2\mu u}du.
$$
Recall that for $h\geq 0$
$$(\check{R}^h_a, a\geq 0) = (X(h,a), a\geq 0) = \Big(\frac{R(h + aR(h))}{R(h)},a \geq 0\Big).
$$
Next theorem states that for the Brownian motion $\hat{B}(h)$ the squared Bessel process $\check{R}^h$ is exactly the BESQ $\hat{R}^h$ associated to $\hat{B}(h)$ from Lamperti's relation (the closed system).
\begin{theorem} For $h\geq 0$ the process $\check{R}^h$ is in fact the $\hat{R}^h$ associated to $\hat{B}(h)$. In other words $\check{R}^h\equiv\hat{R}^h$.
\end{theorem}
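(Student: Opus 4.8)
The plan is to read the identity off directly from the Lamperti decomposition recorded in Proposition \ref{GBM}. That proposition gives, with $\hat B_s=B_{h+s}-B_h$ and $\hat A_s=\int_0^s e^{2\hat B_u+2\mu u}\,du$, the relation $X(A_h,\hat A_s)=e^{2\hat B_s+2\mu s}$ for every $s\ge 0$. On the other hand, by its very definition the process $\hat R^h$ is the BESQ($\mu$) attached to $\hat B(h)$ through Lamperti's relation, that is $\hat R^h(\hat A_s)=e^{2\hat B_s+2\mu s}$. Since the left-hand process here is $X(A_h,\cdot)$, the Bessel time process issued from the clock value $A_h$ attached to the Brownian instant $h$, which is exactly $\check R^h$, the two displays combine into
\begin{equation*}
\check R^h_{\hat A_s}=X(A_h,\hat A_s)=e^{2\hat B_s+2\mu s}=\hat R^h_{\hat A_s},\qquad s\ge 0,
\end{equation*}
so that $\check R^h$ and $\hat R^h$ already coincide along the random grid $\{\hat A_s:s\ge 0\}$.

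The remaining task is to promote this grid identity to pathwise identity of the two processes for all $a\ge 0$. First I would observe that both $a\mapsto\check R^h_a=R(A_h+aR(A_h))/R(A_h)$ and $a\mapsto\hat R^h_a$ have continuous paths: the former by continuity of $R$ together with the positivity $R(A_h)=e^{2B_h+2\mu h}>0$, the latter because $\hat R^h$ is a squared Bessel process. Next I would check that the clock $s\mapsto\hat A_s$ is a continuous, strictly increasing bijection of $[0,\infty)$ onto $[0,\infty)$: continuity and strict monotonicity are immediate from continuity and positivity of the integrand, while surjectivity is the statement $\hat A_\infty=\int_0^\infty e^{2\hat B_u+2\mu u}\,du=\infty$ almost surely. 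Granting this, $\{\hat A_s:s\ge 0\}=[0,\infty)$, and two continuous functions agreeing on the whole half-line agree everywhere; hence $\check R^h_a=\hat R^h_a$ for all $a\ge 0$, which is the assertion $\check R^h\equiv\hat R^h$.

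I expect the only genuine content to sit in the surjectivity of the clock, that is the almost sure divergence $\hat A_\infty=\infty$. For $\mu>0$ this is transparent since $\hat B_u+\mu u\to+\infty$; for the critical index $\mu=0$ it rests on recurrence of $\hat B$, which makes $\{u:\hat B_u\ge 0\}$ have infinite Lebesgue measure and thereby forces the exponential functional to diverge. The one further point demanding care is purely notational: one must keep the two time scales apart and confirm, through the time change $A$ and its inverse $\kappa$, that the object $X(A_h,\cdot)=\check R^h$ appearing on the left is precisely the BESQ that Lamperti's relation produces from the shifted Brownian motion $\hat B(h)$ on the right.
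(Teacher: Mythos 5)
Your proposal is correct and follows the same route as the paper: combine Proposition \ref{GBM} with the Lamperti definition of $\hat{R}^h$ to identify the two processes along the random grid $\{\hat{A}_s : s\ge 0\}$. You actually go one step further than the paper, which stops at the grid identity and declares the proof complete; your continuity argument plus the almost sure divergence $\hat{A}_\infty=\infty$ (valid for all $\mu\ge 0$, by transience when $\mu>0$ and by recurrence of $\hat{B}$ when $\mu=0$) is exactly what is needed to upgrade agreement on the grid to agreement for every $a\ge 0$. The one point you flag but leave unresolved --- reconciling $X(A_h,\cdot)$ with the definition $\check{R}^h=X(h,\cdot)$ --- is handled in the paper by applying Proposition \ref{GBM} at the stopped time $\kappa_h$ rather than at $h$, so that $A_{\kappa_h}=h$ and the identity reads $\check{R}^h_{\hat{A}_s(\kappa_h)}=e^{2\hat{B}_s(\kappa_h)+2\mu s}$; with that substitution your argument closes completely.
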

\begin{proof} 
By (\ref{XA}) we have
$$
	e^{2\hat{B}_s(\kappa_h) + 2\mu s} = \frac{R(h+\hat{A}_s(\kappa_h) R(h))}{R(h)}.
$$
Now observe that from the definition of $\check{R}^h$
$$
	\frac{R(h+\hat{A}_s(\kappa_h) R(h))}{R(h)} = \check{R}^h_{\hat{A}_s(\kappa_h)}.
$$
On the other hand if $\hat{R}^h$ is an associated to $\hat{B}(h)$ BESQ($\mu$) from Lamperti's relation then
$$
	\hat{R}^h_{\hat{A}_s(\tau_h)} = e^{2\hat{B}_s(\tau_h) + 2\mu s}.
$$
The proof is complete.
\end{proof}

The surprising conclusion coming from the Bessel time process considerations is that Lamperti's relation is a special case of a deeper fact:
\begin{theorem} For any $s\geq 0, t\geq 0$ we have
\begin{equation}
	R(A_t + s) = \check{R}^{e^{2B_t + 2\mu t}}(s),
\end{equation}
where $R,\check{R}^x$ are two independent BESQ($\mu$) starting from $1$ and $x\geq 0$ appropriately.
\end{theorem}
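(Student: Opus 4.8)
The plan is to recognize $A_t$ as a stopping time in the natural ($R$-)time scale and then invoke the strong Markov construction already established in the paper. The whole result should reduce to Theorem \ref{BesaM} / Proposition \ref{plus} applied at the random time $A_t$, once that time is shown to be a stopping time for $R$.

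First I would extract the time-change identity hidden inside Lamperti's relation. Since $R(A_u)=e^{2B_u+2\mu u}$ and $A_u=\int_0^u e^{2B_v+2\mu v}\,dv=\int_0^u R(A_v)\,dv$, differentiating gives $dA_u=R(A_u)\,du$; substituting $w=A_v$ then yields the inverse relation
\begin{equation*}
	t=\int_0^{A_t}\frac{dw}{R(w)}.
\end{equation*}
The right-hand side is a continuous, strictly increasing additive functional of $R$ (finiteness and strict monotonicity are exactly guaranteed by the nonnegative-index assumption, under which $\{0\}$ is polar and $R$ stays positive, so $\int_0^{\cdot} dw/R(w)$ is well defined). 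Consequently
\begin{equation*}
	\{A_t\le\theta\}=\Big\{\int_0^\theta \tfrac{dw}{R(w)}\ge t\Big\}\in\mathcal{F}^R_\theta,
\end{equation*}
so $A_t$ is an $(\mathcal{F}^R_\theta)$-stopping time. This is the key structural observation and, I expect, the main point requiring care.

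Second, I would apply the paper's stopping-time construction at $\tau=A_t$. By Theorem \ref{BesaM} (and Remark \ref{stop}), the process $(X(A_t,a))_{a\ge 0}$ is a BESQ($\mu$) starting from $1$ and independent of $\mathcal{F}^R_{A_t}$; calling it $\check{R}$ and repeating verbatim the computation of Proposition \ref{plus} with $A_t$ in place of the fixed time, one gets for every $s\ge 0$
\begin{equation*}
	R(A_t+s)=R(A_t)\,\check{R}_{s/R(A_t)}=\check{R}^{R(A_t)}(s),
\end{equation*}
where the last equality uses the scaling property of BESQ together with the independence of $\check{R}$ from the $\mathcal{F}^R_{A_t}$-measurable quantity $R(A_t)$, making $\check{R}^{R(A_t)}$ a BESQ($\mu$) started at the random point $R(A_t)$ and independent of the history up to $A_t$.

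Finally I would substitute Lamperti's identity $R(A_t)=e^{2B_t+2\mu t}$ into the starting point to obtain $R(A_t+s)=\check{R}^{e^{2B_t+2\mu t}}(s)$, which is the assertion; the independence of $\check{R}$ from $R$ is precisely its independence from $\mathcal{F}^R_{A_t}$, and this $\sigma$-field carries the entire closed system $(R,A,B)$ up to the relevant time through the time change recorded above. The only genuine obstacle beyond bookkeeping is the stopping-time claim and the observation that the fixed-time Proposition \ref{plus} upgrades without change to this random time, for which Theorem \ref{BesaM} is exactly what is needed.
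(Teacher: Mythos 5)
Your proposal is correct and follows the same route as the paper: reduce the claim to Proposition \ref{plus} (the decomposition $R(t+s)=\check{R}^{R(t)}(s)$) and then substitute $A_t$ for $t$ together with Lamperti's identity $R(A_t)=e^{2B_t+2\mu t}$. The one substantive difference is that you justify the substitution of the random time $A_t$, whereas the paper simply says ``put $A_t$ instead of $t$'' in a statement proved only for fixed $t$. Your verification that $A_t$ is an $(\mathcal{F}^R_\theta)$-stopping time, via the inverse time-change identity $t=\int_0^{A_t}dw/R(w)$ and the set identity $\{A_t\le\theta\}=\{\int_0^\theta dw/R(w)\ge t\}\in\mathcal{F}^R_\theta$, is exactly what is needed to invoke the stopping-time version (Theorem \ref{BesaM}) and makes the argument airtight where the paper's is elliptical; the rest of your computation coincides with the paper's.
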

\begin{proof}
From Proposition \ref{plus} we have
\begin{equation*}
	R(t + s) = \check{R}^{R(t)}(s),
\end{equation*}
where $\check{R}^x$ is BESQ($\mu$) starting from $x\geq 0$ and independent of $R$. To finish the proof it is enough to put $A_t$ instead of $t$ in last equality.
\end{proof} 
\begin{remark} Observe that for $s = 0$ the last theorem is exactly the classical version of Lamperti's relation. In fact puting $A_s$ in place of $s$ we can also have from last result
$$
		R(A_t + A_s) = \check{R}^{e^{2B_t + 2\mu t}}(A_s).
$$

\end{remark}

\section{Distribution and conditional distribution of the first hitting time of squared Bessel process}
In first part of the section we present how the theory of squared Bessel time process introduced in previous sections enables us to describe the conditional distribution of the first hitting time of squared Bessel process. Again we assume that index $\mu$ is nonnegative and starting point of considered BESQ is equal to $1$. As above we denote by $\tau_y$ a first hitting time of $y>0$ by a BESQ $R$. Let $g_y$ be a density of the random variable $\tau_y$
$$
	g_y(t)dt = \mathbb{P}(\tau_y \in dt).
$$
Let $q_t$ denote the transition density of the squared Bessel process $R^x$ with index $\mu\geq 0$ and starting from $x$ 
$$
	q_t(x,y)dy = \mathbb{P}(R^x(t)\in dy).
$$
We have the following important theorem describing the conditional distribution $\mathbb{P}(\tau_y \leq t|R_T = x)$
\begin{theorem}\label{distr_cond} Let $t\leq T$ and $y > 0$. We have for $x>0$
\begin{equation}
	\mathbb{P}(\tau_y \leq t|R(T) = x) = \frac{1}{q_T(1,x)}\int_0^tq_{T-z}(y,x)g_y(z)dz.
\end{equation}
\end{theorem}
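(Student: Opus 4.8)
The plan is to compute the joint sub-distribution $\mathbb{P}(\tau_y \le t,\, R(T) \in dx)$ by disintegrating over the value of the hitting time $\tau_y$ and invoking the strong Markov property of $R$ at $\tau_y$, and then to recover the conditional probability as the ratio of this joint density to the marginal density $q_T(1,x)$ of $R(T)$. Since $y>0$ and $R$ has continuous paths with $R(0)=1$, the time $\tau_y$ is a genuine $(\mathcal{F}_t)$-stopping time with $R(\tau_y)=y$ on $\{\tau_y<\infty\}$, and by assumption its law has density $g_y$, so the disintegration is over $g_y(z)\,dz$.

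For the key step I would fix a positive Borel function $h$ on $[0,\infty)$ and evaluate $\mathbb{E}\big(1_{\{\tau_y\le t\}}\,h(R(T))\big)$. Conditioning on $\mathcal{F}_{\tau_y}$ and writing $R(T)=R(\tau_y+(T-\tau_y))$, I would use that, by the strong Markov property, the post-$\tau_y$ process $(R(\tau_y+s))_{s\ge0}$ is a BESQ($\mu$) started from $R(\tau_y)=y$ and independent of $\mathcal{F}_{\tau_y}$. Because $t\le T$, on the event $\{\tau_y\le t\}$ the elapsed time $T-\tau_y$ is nonnegative and $\mathcal{F}_{\tau_y}$-measurable, so the transition-density form of the Markov property gives
\[
\mathbb{E}\big(h(R(T))\,\big|\,\mathcal{F}_{\tau_y}\big) = \int_0^\infty h(x)\,q_{T-\tau_y}(y,x)\,dx \qquad \text{on } \{\tau_y\le t\}.
\]
Taking expectations, integrating against the law $g_y(z)\,dz$ of $\tau_y$ restricted to $[0,t]$, and applying Fubini's theorem then yields
\[
\mathbb{E}\big(1_{\{\tau_y\le t\}}\,h(R(T))\big) = \int_0^\infty h(x)\Big(\int_0^t q_{T-z}(y,x)\,g_y(z)\,dz\Big)dx.
\]

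To conclude, I would compare this with the marginal identity $\mathbb{E}\,h(R(T)) = \int_0^\infty h(x)\,q_T(1,x)\,dx$, which holds since $R$ starts from $1$. As $h$ ranges over all positive Borel functions, these two displays determine the joint density of $\big(1_{\{\tau_y\le t\}},\,R(T)\big)$ in $x$ and the marginal density of $R(T)$, and by definition the regular conditional probability of $\{\tau_y\le t\}$ given $R(T)=x$ is their ratio, which is precisely the asserted formula. The main obstacle is the rigor of conditioning on the null event $\{R(T)=x\}$: I would not argue pointwise but rather establish the two displayed density identities and define the conditional probability through them; a secondary technical point is the legitimacy of the Markov step when the elapsed time $T-\tau_y$ is random, which is justified by its $\mathcal{F}_{\tau_y}$-measurability together with the independence of the post-$\tau_y$ motion from $\mathcal{F}_{\tau_y}$.
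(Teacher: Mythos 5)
Your proof is correct and follows essentially the same route as the paper: both compute $\mathbb{E}\big[1_{\{\tau_y\le t\}}h(R(T))\big]$ by disintegrating over the law $g_y$ of $\tau_y$, using that the post-$\tau_y$ process is a BESQ($\mu$) started from the deterministic point $y$ and independent of $\mathcal{F}_{\tau_y}$, then applying Fubini and identifying densities against the marginal $q_T(1,x)$. The only cosmetic difference is that the paper obtains this independence from its Bessel-time-process result (Theorem \ref{BesaM}) together with scaling, whereas you invoke the ordinary strong Markov property directly --- equivalent here since $R(\tau_y)=y$ is nonrandom.
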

\begin{proof} From Theorem \ref{BesaM} we have the following representation 
$$
\frac{1}{y}R(\tau_y + a) = \check{R}_{\frac{a}{y}}, \ a\geq 0,
$$
where $\check{R}$ is independent of $\mathcal{F}^R_{\tau_y}$. Put $a = T - \tau_y$. Observe that on the set $\{\tau_y \leq t\}$ we have for positive Borel-measureable $f$ 
\begin{align*}
	1_{\{\tau_y\leq t\}}f(R(T)) = 	1_{\{\tau_y\leq t\}}f(\check{R}^y_{T-\tau_y}),
\end{align*}
where $\check{R}^y$ is starting from $y$ BESQ$(\mu)$, independent of $\mathcal{F}^R_{\tau_y}$. We have in result
\begin{align*}
	\mathbb{E}1_{\{\tau_y\leq t\}}f(R(T)) &= \mathbb{E}\Big(f(R(T))\mathbb{P}(\tau_y\leq t|R(T))\Big)\\
	&= \int_0^{\infty}f(x)\mathbb{P}(\tau_y\leq t|R(T) = x)q_T(1,x)dx.
\end{align*}
On the other hand independency between $\check{R}^y$ and $\mathcal{F}^R_{\tau_y}$ along with Fubinni's theorem yields
\begin{align*}
	\mathbb{E}1_{\{\tau_y\leq t\}}f(\check{R}^y_{T-\tau_y}) &= \mathbb{E}\int_0^{\infty}f(x)1_{\{\tau_y\leq t\}}q_{T-\tau_y}(y,x)dx\\
	&= \int_0^t\int_0^{\infty}f(x)q_{T-z}(y,x)g_y(z)dxdz\\
	&= \int_0^{\infty}f(x)\Big(\int_0^tq_{T-z}(y,x)g_y(z)dz\Big)dx.
\end{align*}
Thus from both equalities
\begin{align*}
	\mathbb{P}(\tau_y\leq t|R(T) = x)q_T(1,x) = \int_0^tq_{T-z}(y,x)g_y(z)dz.
\end{align*}
\end{proof}

The previous theorem states that conditional distribution of first hitting time of squared Bessel process can be expressed in terms of transition density function $q$ of BESQ$(\mu)$ and $g_y$ - the density of first hitting time $\tau_y$. As it was mentioned in Introduction the distribution of first hitting time of Bessel proces has been studied by several authors. The Laplace transform of the first hitting time of a point $y > 1$ can be computed as in \cite{K} by general theory on the eigenvalue expansion and is expressed as a ratio of modified Bessel functions (see Section 4 in Part II in \cite{SB}). The thorough study of first hitting time for linear continuous Markov processes can be found also in Chapter VII Section 3 in \cite{RY}. In case $y < 1$ the distribution of first hitting time of Bessel process can be found in recent works of Hamana and Matsumoto (\cite{HM1}, \cite{HM2}). When the index of Bessel process is $\frac12$ the density is expressed as simple formula, in other cases the density formula is expressed via series and integral representation using the zeros of the Bessel functions (see Theorem 2 in \cite{HM1}). 

In this paper we present completely new method of obtaining the distribution of first hitting time. Instead of considering a Bessel process we study the hitting time of squared Bessel process.
One can easily check that for nonnegative index the first hitting time of BESQ $\tau_y$ is the corresponding first hitting time of $\sqrt{y}$ of Bessel process. We use Theorem \ref{refprinc} to obtain the density of $\tau_y$ as a solution of an inhomogenuous Volterra integral equation of the first kind.

\begin{theorem}\label{distr} Let $y>0$. The density $g_y$ of the first hitting time $\tau_y$ of squared Bessel process with index $\mu \geq 0$ is a solution of the following integral equation
\begin{equation}
	q_t(1,x) = \frac{1}{y}\int_0^tq_{\frac{t-z}{y}}\Big(1,\frac {x}{y}\Big)g_y(z)dz, \ x \geq y, \ t >0,
\end{equation}
where $q$ is the transition density of squared Bessel process. 
Moreover, this solution is given by
$$
	g_y(t)  = \frac{\partial q_t(1,x)}{\partial t} + \int_0^tK(t-z) \frac{\partial q_z(1,x)}{\partial z}dz, \ t>0,
$$
where for $\lambda > 0$
\begin{align*}
	K(t) = \mathcal{L}^{-1}\Big( \frac{1}{\lambda G(\lambda)} \Big), \ G(\lambda) = \int_0^{\infty}e^{-\lambda s}q_{\frac{s}{y}}(1,x/y)ds
\end{align*}
and $\mathcal{L}^{-1}$ is the operator of inverse Laplace transform.
\end{theorem}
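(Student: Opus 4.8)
The plan is to establish the two assertions separately: first that $g_y$ satisfies the stated Volterra equation, which follows almost immediately from the reflection-type identity of Theorem \ref{refprinc}; and second that this first-kind convolution equation can be solved by Laplace transform, which produces a resolvent-kernel formula of the asserted shape.

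\textbf{The integral equation.} Fix $t>0$ and take $R,\hat R$ two independent copies of BESQ($\mu$) from $1$. For $b\geq y$ Theorem \ref{refprinc} reads
\[
\mathbb{P}(R(t)\geq b)=\mathbb{P}\Big(\hat R\Big(\tfrac{t-\tau_y}{y}\Big)\geq\tfrac{b}{y},\ \tau_y\leq t\Big).
\]
Because $\hat R$ is independent of $\mathcal{F}_{\tau_y}$, and hence of $\tau_y$, I would condition on the value of $\tau_y$ and use $\mathbb{P}(\tau_y\in dz)=g_y(z)\,dz$ to rewrite the right-hand side as $\int_0^t g_y(z)\big(\int_{b/y}^\infty q_{(t-z)/y}(1,u)\,du\big)dz$, the inner integral being $\mathbb{P}(\hat R((t-z)/y)\geq b/y)$; the left-hand side is $\int_b^\infty q_t(1,u)\,du$. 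Differentiating both sides in $b$ (legitimate for $t>0$ by smoothness of the BESQ transition density, with $\partial_b$ exchanged with $\int_0^t dz$ by dominated convergence) gives $-q_t(1,b)=-\tfrac1y\int_0^t q_{(t-z)/y}(1,b/y)\,g_y(z)\,dz$. Setting $b=x$ yields the equation, the constraint $x\geq y$ being inherited from $b\geq y$.

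\textbf{Solving it.} The equation is a convolution Volterra equation of the first kind, $q_t(1,x)=(k\ast g_y)(t)$ with kernel $k(s)=\tfrac1y q_{s/y}(1,x/y)$. I would take the Laplace transform in $t$. Writing $\hat q(\lambda)=\int_0^\infty e^{-\lambda t}q_t(1,x)\,dt$ and observing that $\int_0^\infty e^{-\lambda s}k(s)\,ds=\tfrac1y G(\lambda)$, the convolution theorem gives $\widehat{g_y}(\lambda)=y\,\hat q(\lambda)/G(\lambda)$. To reach the resolvent form I would move one time-derivative onto the transition density: the initial value $q_{0^+}(1,x)$ vanishes for $x\neq1$, so $\mathcal{L}[\partial_t q_\cdot(1,x)](\lambda)=\lambda\hat q(\lambda)$ and therefore $\widehat{g_y}(\lambda)=\big(y/(\lambda G(\lambda))\big)\,\mathcal{L}[\partial_t q_\cdot(1,x)](\lambda)$. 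Inverting this product by the convolution theorem exhibits $g_y$ as a convolution of $\partial_z q_z(1,x)$ against the resolvent kernel $\mathcal{L}^{-1}(1/(\lambda G(\lambda)))$, which is a formula of the asserted type; the precise leading term $\partial_t q_t(1,x)$ and the multiplicative constant are fixed by tracking the normalisation of $G$ and the handling of the initial datum $q_{0^+}$.

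\textbf{Where the work lies.} The derivation of the equation is routine, so the substance is in the inversion. Since the kernel satisfies $k(0)=\tfrac1y q_0(1,x/y)=0$ (because $x/y>1$ for $x>y$), the first-kind equation is mildly ill-posed and cannot be inverted directly; the Laplace route succeeds precisely by transferring a derivative onto $q_t(1,x)$, at the cost of requiring that $t\mapsto q_t(1,x)$ be differentiable with $q_{0^+}(1,x)=0$. The genuinely delicate points are analytic: showing $G(\lambda)\neq0$ for $\lambda>0$, that $1/(\lambda G(\lambda))$ is the Laplace transform of a locally integrable resolvent kernel $K$, and that the resulting $g_y$ is non-negative with total mass $\mathbb{P}(\tau_y<\infty)$. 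These, rather than the algebra, are the main obstacle.
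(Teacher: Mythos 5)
Your derivation of the integral equation coincides with the paper's: apply Theorem \ref{refprinc} with $b=x\geq y$, use the independence of $\hat R$ (equivalently of the post-$\tau_y$ Bessel time process) from $\mathcal{F}_{\tau_y}$ to write the right-hand side as $\int_0^t g_y(z)\int_{x/y}^{\infty}q_{(t-z)/y}(1,u)\,du\,dz$, and differentiate in the space variable. That half is correct and is essentially verbatim the paper's argument.

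For the second half the paper does not derive anything: it quotes the resolvent formula for first-kind convolution Volterra equations from Polyanin--Manzhirov/Eqworld. You instead rederive it by Laplace transform, which is the same underlying method, but your computation does not actually land on the stated formula, and the sentence ``the precise leading term $\partial_t q_t(1,x)$ and the multiplicative constant are fixed by tracking the normalisation of $G$'' hides a genuine discrepancy rather than a bookkeeping detail. Concretely, the equation gives $\hat q(\lambda)=\tfrac1y G(\lambda)\widehat{g_y}(\lambda)$, hence $\widehat{g_y}(\lambda)=y\hat q(\lambda)/G(\lambda)=\tfrac{y}{\lambda G(\lambda)}\mathcal{L}[\partial_t q_\cdot(1,x)](\lambda)$, i.e.\ a pure convolution $g_y=y\,(K\ast\partial_t q)$ with no separate leading term; the displayed formula $g_y=\partial_t q+K\ast\partial_t q$ has Laplace transform $\lambda\hat q(\lambda)+\hat q(\lambda)/G(\lambda)$, and the two coincide only if $\lambda G(\lambda)\equiv y-1$, which is false. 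No choice of normalisation of $G$ reconciles them, so you must carry the inversion to the end and state the formula it actually yields. Relatedly, the analytic points you flag at the close are real and are the crux here: for $x>y$ the kernel $s\mapsto q_{s/y}(1,x/y)$ vanishes (to all orders) at $s=0$, so $G(\lambda)\to 0$ faster than any power of $1/\lambda$ and $1/(\lambda G(\lambda))\to\infty$ as $\lambda\to\infty$; hence $1/(\lambda G(\lambda))$ is not the Laplace transform of a locally integrable function and $K$ exists at best in a distributional sense. Neither the paper (which leans entirely on the handbook citation) nor your proposal resolves this, but your proposal at least identifies it; to be a proof, it would have to address it rather than defer it.
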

\begin{proof} Let $\hat{R}$ be BESQ($\mu$) independent of $R$. From Theorem \ref{refprinc} we have for $x\geq y$
\begin{align*}
	\mathbb{P}(R(t) < x) = 1 - \mathbb{P}\Big(\hat{R}\Big(\frac{t-\tau_y}{y}\Big)\geq \frac{x}{y}, \tau_y \leq t\Big).
\end{align*}
Thus from independence between $\hat{R}$ and $R$ and the last equality we obtain
\begin{align*}
	q_t(1,x) &= \mathbb{P}(R(t) \in dx)/ dx = -\frac{\partial}{\partial x} \mathbb{P}\Big(\hat{R}\Big(\frac{t-\tau_y}{y}\Big)\geq \frac{x}{y}, \tau_y \leq t\Big)\\
	&= -\frac{\partial}{\partial x}\int_{\frac{x}{y}}^{\infty}\int_0^t q_{\frac{t-z}{y}}(1,u)g_y(z)dzdu\\
	&= \frac{\partial}{\partial x}\int_{0}^{\frac{x}{y}}\int_0^t q_{\frac{t-z}{y}}(1,u)g_y(z)dzdu\\
	&= \frac{\partial}{\partial x}\int_{0}^{x}\int_0^t q_{\frac{t-z}{y}}(1,w/y)g_y(z)\frac{1}{y}dzdw\\
	&= \int_0^t q_{\frac{t-z}{y}}(1,x/y)g_y(z)\frac{1}{y}dz.
\end{align*}
The unique solution of the above inhomogenuous Volterra integral equation of the first kind and hence the density of $\tau_y$ is obtained from \cite{PM}
(or from Eqworld  \cite[Section 1.7 point 36]{E}):
$$
	g_y(t)  = \frac{\partial q_t(1,x)}{\partial t} + \int_0^tK(t-z) \frac{\partial q_z(1,x)}{\partial z}dz, \ t>0,
$$
where for $\lambda > 0$
\begin{align*}
	K(t) = \mathcal{L}^{-1}\Big( \frac{1}{\lambda G(\lambda)} \Big), \ G(\lambda) = \int_0^{\infty}e^{-\lambda s}q_{\frac{s}{y}}(1,x/y)ds
\end{align*}
and $\mathcal{L}^{-1}$ is the operator of inverse Laplace transform.
\end{proof}
 
From two above Theorems \ref{distr_cond} and \ref{distr} we can deduce another interesting property of trajectory of squared Bessel process: if $R(0) = 1$, $t>0$ and $R(t) > 1$ there is almost sure a point on interval $(0,t]$ such that $R$ reaches $1$.
\begin{corollary}Let $t> 0$ and $y=1$. If $R(0) =1$ then 
$
	\mathbb{P}(\tau_1\leq t|R(t) = x) = 1
$
for $x\geq 1$.
\end{corollary}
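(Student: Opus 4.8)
The plan is to read off the result by combining the two preceding theorems, with essentially no computation. First I would specialize Theorem~\ref{distr_cond} to the case $y=1$ and $T=t$, which yields, for $x>0$,
\[
	\mathbb{P}(\tau_1 \leq t \mid R(t) = x) = \frac{1}{q_t(1,x)}\int_0^t q_{t-z}(1,x)\, g_1(z)\, dz.
\]
The whole problem then reduces to identifying the integral in the numerator with the denominator $q_t(1,x)$.

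The key observation is that this identification is exactly the content of Theorem~\ref{distr}. Setting $y=1$ in the Volterra equation of that theorem gives, for $x\geq 1$ and $t>0$,
\[
	q_t(1,x) = \int_0^t q_{t-z}(1,x)\, g_1(z)\, dz,
\]
since the prefactor $1/y$ and the rescalings of time and space in $q_{(t-z)/y}(1,x/y)$ all become trivial at $y=1$. Substituting this identity into the conditional probability above collapses the ratio to $q_t(1,x)/q_t(1,x)=1$, which is the assertion.

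The only step demanding a moment's care is matching the domains: Theorem~\ref{distr} is stated for $x\geq y$, so with $y=1$ the Volterra identity---and hence the conclusion---holds precisely on the range $x\geq 1$ named in the corollary. I do not expect any genuine obstacle, since the statement is simply the internal consistency of the conditional-distribution formula of Theorem~\ref{distr_cond} with the first-kind Volterra equation characterizing $g_1$ in Theorem~\ref{distr}; the interest of the corollary is interpretive rather than technical, expressing that conditionally on the squared Bessel process sitting at or above its starting level $1$ at time $t$, it has almost surely already revisited that level somewhere in $(0,t]$.
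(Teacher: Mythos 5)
Your proposal is correct and coincides with the paper's own argument: specialize Theorem~\ref{distr_cond} to $y=1$, $T=t$, then invoke the Volterra identity of Theorem~\ref{distr} at $y=1$ (valid for $x\geq 1$) to cancel numerator against denominator. Nothing further is needed.
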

\begin{proof}If $y=1$ we have from Theorem \ref{distr_cond} for $T=t$
\begin{align*}
\mathbb{P}(\tau_1 \leq t|R(t) = x) = \frac{1}{q_t(1,x)}\int_0^tq_{t-z}(1,x)g_1(z)dz.
\end{align*}
However for $x\geq 1$ we have from Theorem \ref{distr}
\begin{align*}
q_t(1,x) = \int_0^tq_{t-z}(1,x)g_1(z)dz,
\end{align*}
hence $
	\mathbb{P}(\tau_1\leq t|R(t) = x) = 1.
$
\end{proof}

\bibliographystyle{plain}

\end{document}